\theoremstyle{plain}
\newtheorem{thm}{Theorem}[section]
\newtheorem{lem}[thm]{Lemma}
\newtheorem{pro}[thm]{Proposition}
\theoremstyle{definition}
\newtheorem{ex}[thm]{Example}
\newtheorem{rmk}[thm]{Remark}
\newtheorem{defn}[thm]{Definition}
\newtheorem{assumption}[thm]{Assumption}
\DeclareMathOperator{\fchar}{char}
\DeclareMathOperator{\End}{End}
\DeclareMathOperator{\Ker}{Ker}
\title[Commuting elements in the $q$-deformed Heisenberg algebra]{Algebraic curves for commuting elements in
the $q$-deformed Heisenberg algebra}
\author{Marcel de Jeu}
\address{Marcel de Jeu, Mathematical Institute,
Leiden University, P.O. Box 9512, 2300 RA Leiden,
The Netherlands}
\email{mdejeu@math.leidenuniv.nl}
\author{Christian Svensson}
\address{Christian Svensson, Mathematical Institute, Leiden University,
P.O. Box 9512, 2300 RA Leiden, The Netherlands,
and Centre for Mathematical Sciences, Lund
University, Box 118, SE-221 00 Lund, Sweden}
\email{chriss@math.leidenuniv.nl}
\author{Sergei Silvestrov}
\address{Sergei Silvestrov, Centre for Mathematical Sciences, Lund
University, Box 118, SE-221 00 Lund, Sweden}
\email{Sergei.Silvestrov@math.lth.se}
\begin{document}

\begin{abstract}
In this paper we extend the eliminant construction of Burchnall and Chaundy for
commuting differential operators in the Heisenberg algebra to the $q$-deformed
Heisenberg algebra and show that it again provides annihilating curves for
commuting elements, provided $q$ satisfies a natural condition. As a side
result we obtain estimates on the dimensions of the eigenspaces of elements of
this algebra in its faithful module of Laurent series.
\end{abstract}

\subjclass[2000]{Primary 16S99; Secondary 81S05, 39A13}

\keywords{$q$-deformed Heisenberg algebra, commuting elements, algebraic dependence, eliminant}

\maketitle

\section{Introduction}\label{sec:introduction}
In the literature on algebraic dependence of commuting elements in the
Heisenberg algebra -- a result which is relevant for the algebro-geometric
method of solving certain non-linear partial differential equations -- one can
find several different proofs of this fact, each with its own advantages. The
first proof utilizes analytical methods and was found by Burchnall and Chaundy
\cite{Burchnall&Chaundy1} in the 1920's. It is basically their approach which
was rediscovered later and applied in the context of non-linear differential
and difference equations (see for example
\cite{Krichever,Mumford,MoerbekeMumford}, and for further references the book
\cite{HSbook}). Another and more algebraic method of proof for differential
operators was suggested by Amitsur \cite{Amitsur} in the 1950's, and in the
late 1990's a more algorithmic combinatorial method of proof was found
\cite{HDiscMath,HSbook}. One of the motivating problems for these developments
was to describe, as detailed as possible, commuting differential operators and
their properties
\cite{Burchnall&Chaundy1,Burchnall&Chaundy2,Burchnall&Chaundy3}. Clearly the
result of Burchnall and Chaundy \cite{Burchnall&Chaundy1}, stating that two
commuting differential operators in the Weyl algebra satisfy equations for
algebraic curves which can be explicitly calculated by the so-called eliminant
method, is then an important tool.

In 1994 Silvestrov, based on the existing literature and a series of trial
computations, conjectured, loosely speaking, that it should be true in a
considerably greater generality than the context of the Weyl algebra that two
commuting elements in an algebra lie on a curve, and, moreover, that the
eliminant construction of Burchnall and Chaundy should then produce such curves
in this wider context. We refer to \cite{SSJ} for more precise information on
this conjecture. The conjecture includes the $q$-deformed Heisenberg algebra
$H_K(q)$ of this paper, which is the associative algebra generated over a field
$K$ by two elements $A$ and $B$ subject to the relation $AB-qBA=1$. The case
$q=1$ and $K=\mathbb{R},\mathbb{C}$ yields the classical Weyl algebra for which
the result was known from the work by Burchnall and Chaundy.

There have been previous results supporting the conjecture for $H_K(q)$. In
\cite{HSbook} it was established under Assumption~\ref{assumptionq} below
(which essentially amounts to $q$ not being a root if unity) that
two commuting elements in $H_K(q)$ do in fact lie on a curve, using methods
which have since then been extended to more general algebras and rings
generalizing $q$-deformed Heisenberg algebras (generalized Weyl structures and
graded rings) in \cite{LHSSGWSergpotJAlg}. The proof in \cite{HSbook} is rather
different from the approach as followed by Burchnall and Chaundy. It is
constructive in the sense that it can be used effectively to compute algebraic
curves for any two given commuting elements, but it does not give additional a
priori information on, e.g., the coefficients of the curves or their degree.
The eliminant construction on the other hand \emph{does} provide such a priori
information (cf.\ Theorem~\ref{mainthm}), so that establishing the validity or
invalidity of this construction is a relevant issue. In \cite{LarssonSilv}, a
step in that direction was made by offering a number of examples all supporting
the conjecture that the eliminant construction should work for general
$H_K(q)$.

In this paper Silvestrov's conjecture for $H_K(q)$ is confirmed as
Theorem~\ref{mainthm} under Assumption~\ref{assumptionq}. For non-zero $q$ not
satisfying Assumption~\ref{assumptionq} it is known \cite{HDiscMath,HSbook}
that there are commuting elements in $H_K(q)$ which are algebraically
independent. Thus, as long as $q\neq 0$, the eliminant construction gives a
method to produce such curves precisely when the existence of such a method is
not excluded a priori, confirming the part of the conjecture concerning the
validity of the eliminant construction. The case $q = 0$ seems to be still
open.

In closing, let us remark that there are also results known about algebraic (in)dependence of commuting elements of the quantum plane (i.e., of the complex algebra generated by elements $A$ and $B$ subject to the condition $AB-qBA=0$) if $q$ is not a root of unity. It was proved by Artamonov and Cohn \cite{ArCo} that the commutant of an arbitrary non-constant element of the quantum plane is a commutative algebra of transcendence degree one, and later this result was sharpened by Makar-Limanov \cite{MaLi}, who gave a direct proof that this commutant is actually isomorphic to a subalgebra of $\mathbb C[X]$. We refer to \cite{MaLi} for a more detailed discussion of the commutant in the quantum plane case and the relevant literature.

\section{Basic notions and statement of the result}\label{sec:basicnotions}

In this section we introduce the basic notions and state our main result, Theorem~\ref{mainthm}. We
also explain the structure of the proof in the subsequent sections, which uses a
faithful module described in the current section. The section concludes with a remark on the difference with the original situation as considered by Burchnall and Chaundy and a description of the contents of the remaining sections.

Let $K$ be a field. If $q \in K$ then $H_K (q)$, the $q$-deformed Heisenberg
algebra over $K$, is the unital associative $K$-algebra which is generated by
two elements $A$ and $B$, subject to the $q$-commutation relation $AB - qBA =
I$. This algebra is sometimes also called the $q$-deformed Weyl algebra, or the
$q$-deformed Heisenberg-Weyl algebra, but we will follow the terminology in
\cite{HSbook}. We will prove that - under a condition on $q$ - for any
commuting $P, Q \in H_K (q)$ of order at least one (where ``order'' will be
defined below), there exist finitely many explicitly calculable polynomials
$p_i \in K[X, Y]$ such that $p_i (P, Q) =0$ for all $i$, and at least one of
the $p_i$ is non-zero. Thus $P$ and $Q$ lie on at least one algebraic curve. The number of polynomials $p_i$ depends not only on the order
of $P$ and $Q$, but also on their coefficients. The polynomials are
obtained by the analogue of the eliminant construction of Burchnall and Chaundy
\cite{Burchnall&Chaundy1,Burchnall&Chaundy2,Burchnall&Chaundy3} for the case
$q=1$ as mentioned in the introduction, and which we will now explain.

Define the $q$-integer $\{n\}_q$, for $n \in \mathbb{Z}$, by
\[\{n\}_q = \left\{ \begin{array}{lr}
\frac{q^n -1}{q-1} & q \neq 1;\\
 n & q = 1.
\end{array}\right.\]

In order for our method to work, and also for the eliminant construction to be well-defined to start with, we impose the following condition on $q$. It almost amounts to requiring that $q$ is not a root of unity, but one has to take the characteristic of $k$ into account.

\begin{assumption}\label{assumptionq}
Throughout this paper we assume that $q\neq 0$ and $\{n\}_q\neq 0$ if
$n\in\mathbb{Z}$ is non-zero.
\end{assumption}

\begin{rmk}\label{nonunroot}
The following are equivalent for $q \neq 0$:
\begin{enumerate}
\item for $n \in \mathbb{Z}$, $\{n\}_q =0$ if and only if $n=0$; \item for
$n_1, n_2 \in \mathbb{Z}$, $\{n_1\}_{q} = \{n_2\}_{q}$ if and only if $n_1 =
n_2$; \item $\left\{\begin{array}{lr}
q \textup{ is not a root of unity other than } 1, & \textup{if } \fchar k = 0;\\
q \textup{ is not a root of unity},  & \textup{if } \fchar k \neq 0.\\
\end{array}
\right.$
\end{enumerate}
Hence under our assumptions $K$ is infinite. Part (2) of this remark will prove
to be essential later on when we consider the dimension of eigenspaces.
\end{rmk}

Let $\mathcal{L}$ be the $K$-vector space of all
formal Laurent series in a single variable $t$
with coefficients in $K$. Define
\[M \left(\sum_{n= -\infty}^{\infty} a_n t^n\right) = \sum_{n = - \infty}^{\infty} a_n t^{n+1} = \sum_{n = - \infty}^{\infty} a_{n-1} t^n,\]
\[D_q \left(\sum_{n= -\infty}^{\infty} a_n t^n\right) = \sum_{n = - \infty}^{\infty} a_n \{n\}_q t^{n-1} = \sum_{n = - \infty}^{\infty} a_{n+1}
\{n+1\}_{q} t^{n}.\] Alternatively, one could
introduce $\mathcal{L}$ as the vector space of
all functions from $\mathbb{Z}$ to $K$ and let
$M$ act as the right shift and $D_q$ as a
weighted left shift, but the Laurent series model
is more appealing.

The algebra $H_K (q)$ has $\{I, A, A^2, \ldots\}$
as a free basis in its natural structure as a
left $K[X]$-module where $X$ acts as left multiplication with $B$.
If an arbitrary non-zero element $P$ of $H_K (q)$
is then written as
\[P = \sum_{j=0}^m p_j (B) A^j,\,\, p_m \neq 0,\]
for uniquely determined $p_j \in K[X]$ and $m
\geq 0$, then the integer $m$ is called the
\emph{order} of $P$ (or the degree of $P$ with respect to
A) \cite{HSbook}.

By sending $A$ to $D_q$ and $B$ to $M$, $\mathcal{L}$ becomes a faithful $H_K
(q)$-module, as is easily seen \cite{HSbook}. We will identify $H_K (q)$ with
its image in $\End_K(\mathcal{L})$ under this representation. Thus $\{1, D_q,
D_q^2, \ldots\}$ is a free basis of the image of $H_K (q)$ in its natural
structure as a left $K[X]$-module, where $X$ acts as left multiplication with
the endomorphism $M$, and if $P\neq 0$ is written uniquely as
\begin{equation}\label{eq:P}
P = \sum_{j=0}^m p_j (M) D_q^j,\,\,
\ p_m \neq 0,
\end{equation}
for uniquely determined $p_j \in K[X]$ and $m \geq 0$, then $m$ is the order of $P$.

We will now explain the eliminant construction. We will do so in terms of the faithful representation of $H_K(q)$ as endomorphisms of $\mathcal L$ in order to stay as close as possible to the proofs in the remainder of this paper, but the reader will have no trouble formulating everything in terms of the original generators.

Let $P, Q \in H_K (q)$ be of order $m \geq 1$ and $n \geq 1$, respectively, with $P$ as in \eqref{eq:P} and
\begin{equation}\label{eq:Q}
Q = \sum_{j=0}^n q_j (M) D_q^j \,\,\, (n \geq 1, q_n \neq 0).
\end{equation}
Write, for $k = 0, \ldots, n-1$,
\[D_q^k P = \sum_{j=0}^{m+k} p_{k, j} (M) D_q^j, \textup{ with } p_{k,j} \in K[X],\]
and, for $l = 0, \ldots, m-1$, write
\[D_q^l Q = \sum_{j=0}^{n+l} q_{l, j} (M) D_q^j, \textup{ with } q_{l,j} \in K[X].\]
Using these expressions we may build up an $(m+n) \times (m+n)$-matrix with
entries in the polynomial ring $K[X,\lambda,\mu]$ in three variables over $K$, as follows. For $k = 1, \ldots, n$, the $k$-th row is given, from left to
right, by the coefficients of the increasing powers of $D_q$ in the expression
$D_q^{k-1} P - \lambda D_q^{k-1}= \sum_{j=0}^{m+k-1} p_{k-1, j} (M) D_q^j -
\lambda D_q^{k-1}$. For $k = n+1, \ldots, n+m$, the $k$-th row is given, from
left to right, by the coefficients of the increasing powers of $D_q$ in the
expression $D_q^{k-n-1} Q - \mu D_q^{k-n-1}= \sum_{j=0}^{k-1} p_{k-n-1, j} (M)
D_q^j - \mu D_q^{k-n-1}$. The determinant of this matrix is an element of
$K[X,\lambda,\mu]$ which is called the \emph{eliminant} of $P$ and $Q$. We
denote it by $\Delta_{(P, Q)} (X, \lambda, \mu)$. For clarity, we include the
following example.
\begin{ex}\label{eli32}
Let $P$ and $Q$ be as above, with $m=3$ and $n=2$.
We then have
\\
\\
$\Delta_{P,Q} (X, \lambda, \mu)=$
\[\left|\begin{array}{ccccc}
p_{0,0}(X) - \lambda & p_{0,1}(X) & p_{0,2} (X) & p_{0,3} (X) & 0\\
p_{1,0}(X)  & p_{1,1} (X)  - \lambda & p_{1,2} (X)  & p_{1,3} (X) & p_{1,4} (X)\\
q_{0,0}(X) - \mu & q_{0,1}(X) & q_{0,2} (X) & 0 & 0\\
q_{1,0}(X)  & q_{1,1} (X)  - \mu & q_{1,2} (X)  & q_{1,3} (X) & 0\\
q_{2,0}(X)  & q_{2,1} (X)  & q_{2,2}(X) - \mu & q_{2,3}(X) & q_{2,4} (X)
\end{array}\right|.\]
\end{ex}
We need a few more preparatory definitions in order to be able to state Theorem~\ref{mainthm} in its most precise form, which not only tells us that the eliminant construction yields explicit curves for commuting elements $P$ and $Q$ of $H_K(q)$, but which also gives a priori information on the maximal number of curves thus obtained, on their maximal degree and on their coefficients.

If $P$ and $Q$ are as in \eqref{eq:P} and \eqref{eq:Q}, respectively, then let
\begin{equation}\label{eq:s_def}s = n \max_{j} \deg(p_j) +  m \max_j \deg(q_j).
\end{equation}
A moment's thought shows that $s$ is an upper bound for the degree of $X$ which occurs in $\Delta_{P,Q} (X, \lambda, \mu)$, so that we can define the polynomials $\delta_i\in K[\lambda,\mu]\,\,(i=0,\ldots,s)$ by
\begin{equation}\label{eq:delta_i_def}
\Delta_{P,Q} (X, \lambda, \mu) = \sum_{i=0}^s \delta_i (\lambda, \mu) X^i.
\end{equation}
Note that the $\delta_i$ define curves over $K$ of degree at most $\max(m,n)$.
Finally, let
\begin{equation}\label{eq:t_def}
t = \frac{1}{2} n(n-1) \max_j \deg(p_j) + \frac{1}{2} m(m-1)\max_j \deg(q_j).
\end{equation}

\begin{thm}\label{mainthm}
Let $K$ be a field and $0 \neq q \in K$ be such
that $\{n\}_q =0$ if and only if $n=0$. Suppose $P$ as in \eqref{eq:P} and $Q$ as in \eqref{eq:Q} are commuting
elements of $H_K (q)$ of order $m\geq 1$ and $n\geq 1$, respectively. Let $\Delta_{P,Q} (X, \lambda, \mu)\in
K[X,\lambda,\mu]$ be the eliminant constructed as above, define $s$ as in \eqref{eq:s_def}, $\delta_i\in K[\lambda,\mu]\,\,(i=1,\ldots,s)$ as in \eqref{eq:delta_i_def}, and $t$ as in \eqref{eq:t_def}.

Then $\Delta_{P,Q} \neq 0$. In fact, $\Delta_{P,Q}$ has degree $n$ as an element of $K[X,\mu][\lambda]$ and its
non-zero coefficient of $\lambda^n$ is $(-1)^n \prod_{k=0}^{m-1} q_n(q^{k}X)$. Likewise, $\Delta_{P,Q}$ has
degree $m$ as an element of $K[X,\lambda][\mu]$ and its non-zero coefficient of
$\mu^m$ is $(-1)^m \prod_{k=0}^{n-1} p_m(q^{k} X)$. As an element of $K[\lambda,\mu][X]$, $\Delta_{P,Q}$ has degree at most $s$.
Furthermore,
\begin{enumerate}
\item if $R$ is the subring of $K$ which is generated by the coefficients of all $p_{i,j}$ and $q_{i,j}$ occurring in the matrix defining the eliminant, then the $\delta_i$ are actually elements of $R[q][\lambda,\mu]$. In fact, when viewed as polynomials in $\lambda$ and $\mu$, each coefficient of the $\delta_i$ can be written as $\sum_{l=0}^t r_l q^l$ for some $r_l\in R\,\,(l=0,\ldots,t)$;
\item at least one of the $\delta_i$ is non-zero;
\item $\delta_i (P, Q) = 0$ for all $i = 0,\ldots, s$.
\end{enumerate}
\end{thm}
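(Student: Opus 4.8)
The plan is to work inside the faithful module $\mathcal{L}$ and study the action of the eliminant matrix on the common eigenspaces of $P$ and $Q$. First I would fix a scalar $\lambda_0 \in K$ and consider the eigenvalue problem $Pf = \lambda_0 f$ for $f \in \mathcal{L}$; since $P$ has order $m$, one expects—and this is where the dimension estimates advertised in the abstract enter—that the solution space is finite-dimensional, with dimension controlled by $m$ and by the orders of vanishing of the leading coefficient $p_m$ at points of the $q$-orbit $\{q^k\}$. The key structural observation is that if $P$ and $Q$ commute, then $Q$ preserves each eigenspace $V_{\lambda_0} = \{f : Pf = \lambda_0 f\}$, so $Q$ acts on $V_{\lambda_0}$ as an endomorphism of a finite-dimensional space and hence satisfies its own characteristic polynomial there. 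The eliminant $\Delta_{P,Q}(X,\lambda,\mu)$, I claim, is built precisely so that its vanishing encodes the resultant-type condition that the two linear systems "$P - \lambda$" and "$Q - \mu$" have a common solution; concretely, substituting $\lambda = \lambda_0$ and evaluating appropriately, $\Delta_{P,Q}$ becomes (up to the explicitly computed leading factors) a polynomial in $\mu$ whose roots are the eigenvalues of $Q$ on $V_{\lambda_0}$.

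The order in which I would carry out the steps: (i) compute the two leading coefficients by a direct cofactor/triangularity argument on the matrix in Example~\ref{eli32}—the coefficient of $\lambda^n$ comes from the product of the $n$ diagonal entries $p_{k-1,k-1}(X) - \lambda$ in the top block, whose highest-$\lambda$ contribution forces selection of the entries $q_{l,m+l}(X)$ from the bottom block, and since $D_q^l Q$ has leading term $q_n(q^l X)\{\,\cdot\,\}$-twisted by the commutation relation $D_q^l M^r = \dots$, one gets $\prod_{k=0}^{m-1} q_n(q^k X)$; the $\mu^m$ coefficient is symmetric. This already gives $\Delta_{P,Q} \neq 0$ because under Assumption~\ref{assumptionq} the field $K$ is infinite (Remark~\ref{nonunroot}) and $q_n, p_m$ are non-zero polynomials, so these leading coefficients are non-zero in $K[X]$, which forces $\Delta_{P,Q} \neq 0$ and in particular yields part (2), since if all $\delta_i$ vanished then $\Delta_{P,Q}$ would be zero. (ii) For part (1), inspect the construction: every entry $p_{i,j}, q_{i,j}$ arises from repeatedly applying $D_q$ and $M$ and using $D_q M - q M D_q = I$ (equivalently the rewriting $D_q^k M^r = \sum q^{?}\{?\}_q M^{r-s} D_q^{k-s}$ type formulas), so each such coefficient is a $\mathbb{Z}[q]$-combination of the original coefficients; taking the determinant and collecting powers of $X$ keeps us in $R[q][\lambda,\mu]$, and the bound $t$ on the power of $q$ comes from summing the $q$-degrees accumulated in passing $D_q$ past the at most $\max_j\deg(p_j)$ (resp. $\max_j \deg(q_j)$) powers of $M$, over the $\binom{n}{2}$ (resp. $\binom{m}{2}$) pairs of rows. (iii) For part (3), this is the substitution step: I would show that for a Zariski-dense set of pairs $(\lambda_0,\mu_0)$ with a common eigenvector, $\Delta_{P,Q}(X,\lambda_0,\mu_0)$ vanishes identically as a polynomial in $X$, hence all $\delta_i(\lambda_0,\mu_0)=0$; then invoke the commutativity-plus-finite-dimensionality argument to say that $(\lambda_0,\mu_0)$ ranges over enough points—precisely, for each $\lambda_0$ the eigenvalues $\mu_0$ of $Q|_{V_{\lambda_0}}$ give roots, and varying $\lambda_0$ over the infinite field $K$ (or its algebraic closure, then descending) makes $\delta_i(\lambda,\mu)$, viewed via the substitution $(\lambda,\mu)\mapsto(P,Q)$ acting on $\mathcal{L}$, annihilate a faithful module, hence $\delta_i(P,Q)=0$ in $H_K(q)$.

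The substitution argument in (iii) needs slightly more care than "plug in": what one really wants is $\delta_i(P,Q)=0$ as an operator identity. I would get this by showing $\Delta_{P,Q}(M, P, Q) = 0$ as an endomorphism of $\mathcal{L}$—i.e. that the matrix relation among the rows, which expresses that the rows are linearly dependent over the relevant ring once we set $\lambda = P$, $\mu = Q$ acting on a cyclic-type vector—forces the determinant to kill everything; then since $M$ acts as left multiplication by $B$ and the image of $H_K(q)$ in $\End_K(\mathcal{L})$ is free over $K[X]$ (with $X \mapsto M$), reading off coefficients of powers of $M$ gives $\delta_i(P,Q) = 0$ for each $i$. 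The main obstacle I anticipate is precisely establishing that the eliminant's vanishing genuinely captures the common-eigenvector condition in the $q$-deformed setting: in the classical Burchnall–Chaundy case one uses that a differential operator of order $m$ has an $m$-dimensional solution space for $Pf=\lambda f$, but here $D_q$ is a weighted shift on Laurent series and the solution space of $Pf = \lambda_0 f$ can jump in dimension exactly at the bad points where $p_m$ vanishes on a $q$-orbit through a point whose "index" differences are integers—this is why Assumption~\ref{assumptionq}, via part (2) of Remark~\ref{nonunroot} ($\{n_1\}_q = \{n_2\}_q \Leftrightarrow n_1 = n_2$), is indispensable: it guarantees the recursion defining a Laurent-series eigenvector has a controlled, finite-dimensional solution space, uniformly enough that the determinant computation and the density argument go through.
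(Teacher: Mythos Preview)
Your handling of the leading coefficients, the non-vanishing of $\Delta_{P,Q}$, and parts (1)--(2) follows the paper's line and is fine. The gap is in part (3), and it is exactly the difficulty the paper is built around.

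From the cofactor argument you only obtain
\[
\Delta_{P,Q}(M,\lambda_0,\mu_0)\,v_{\lambda_0,\mu_0}=0,
\]
not that $\Delta_{P,Q}(X,\lambda_0,\mu_0)$ vanishes identically in $X$. On the Laurent-series module $\mathcal{L}$ a non-zero polynomial in $M$ can have non-trivial kernel (e.g.\ $(M-1)\sum_n t^n=0$), so one cannot pass from ``kills the eigenvector'' to ``is the zero polynomial''. Your step ``show that for a Zariski-dense set of pairs $\Delta_{P,Q}(X,\lambda_0,\mu_0)$ vanishes identically'' is precisely where the proof lives, and you have not said how to do it. The dimension estimate for $\ker(P-\lambda_0)$ that you emphasise is not the relevant obstruction; the issue is the kernel of a polynomial in $M$.

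The paper closes this gap with Theorem~\ref{simeigfin}: any $v$ satisfying $(P-\lambda)v=0$ \emph{and} $(p(M)-\mu)v=0$ for some non-constant $p$ of degree $\leq d$ lies in a fixed finite-dimensional subspace $\mathcal{L}_{P,d}$ depending only on $p_m$ and $d$. Since eigenvectors for distinct $(\lambda_0,\mu_0)$ are linearly independent, only finitely many pairs can have $\Delta_{P,Q}(M,\lambda_0,\mu_0)$ non-constant; for the remaining infinitely many pairs it is constant, hence zero, giving $\delta_i(\lambda_0,\mu_0)=0$. Then $\delta_i(P,Q)v_{\lambda_0,\mu_0}=\delta_i(\lambda_0,\mu_0)v_{\lambda_0,\mu_0}=0$ for infinitely many independent vectors, and Theorem~\ref{thm:cordimker} finishes. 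Proving Theorem~\ref{simeigfin} requires the explicit description of $\ker p(M)$ via the generalised eigenvectors $\Psi_{\alpha,s}$ of $M$ and a partial-order analysis of how $P$ moves the indices $(\alpha,s)$ (Sections~\ref{sec:kerpol}--\ref{sec:simultaneous}); this is the substantive content you are missing.

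Your alternative route ``show $\Delta_{P,Q}(M,P,Q)=0$ and read off coefficients of $M^i$'' does not work either: $M$ does not commute with $P$ or $Q$, so the substitution $X\mapsto M$, $\lambda\mapsto P$, $\mu\mapsto Q$ is not into a commutative ring, the determinant/cofactor identity is unavailable, and even formally one cannot extract $\delta_i(P,Q)=0$ from $\sum_i M^i\delta_i(P,Q)=0$ since $\{M^i\}$ is a basis for $H_K(q)$ as a \emph{left} $K[M]$-module with coefficients on the left, not with right coefficients $\delta_i(P,Q)\in H_K(q)$.
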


\begin{rmk}
Note that parts (2) and (3) state that the eliminant construction gives at least one non-trivial curve for commuting $P$ and $Q$, and at most $s$.
Each of these curves is defined over $R[q]$, where $R$ is the ring in part (1) and where the power of $q$ -- when viewed as a formal variable -- occurring in the coefficients of these curves does never exceed $t$. Furthermore, as we had already noted, each of these curves is of degree at most $\max(m,n)$.
\end{rmk}

The reader will easily convince himself of all statements in the theorem other than (3). We will now embark on the proof of (3), which occupies the remainder of this paper.
The idea is as follows. Suppose $\lambda_0, \mu_0 \in K$ and $0 \neq
v_{\lambda_0, \mu_0} \in \mathcal{L}$ is a common eigenvector of $P$ and $Q$:
\[P v_{\lambda_0, \mu_0} = \lambda_0 v_{\lambda_0, \mu_0},\]
\[Q v_{\lambda_0, \mu_0} = \mu_0 v_{\lambda_0, \mu_0}.\]
Then the specialization $X=M, \lambda = \lambda_0, \mu = \mu_0$ of the matrix
defining the eliminant yields a matrix of commuting endomorphisms of $\mathcal{L}$ having
the vector $(v_{\lambda_0, \mu_0}, \ldots, D_q^{m+n-1} v_{\lambda_0, \mu_0})^T$
in its kernel. Since the coefficients of the matrix are from a commutative ring, multiplication from the left with the matrix of cofactors shows that $(v_{\lambda_0, \mu_0}, \ldots, D_q^{m+n-1} v_{\lambda_0, \mu_0})^T$ is annihilated by a diagonal matrix with $\Delta_{P,Q} (M, \lambda_0,
\mu_0)$ on the diagonal. In particular, $\Delta_{P,Q} (M, \lambda_0,
\mu_0) \, v_{\lambda_0, \mu_0} = 0$. Now it does not follow automatically from
this that $\Delta_{P,Q} (M, \lambda_0, \mu_0) =0$ in $H_K (q)$ since a
polynomial in $M$ might have non-trivial kernel, as the example $(M-1) \sum_n
t^n =0$ shows. However, embedding $K$ in an algebraically closed field if
necessary, we will be able to show that there exist infinitely many such pairs
$(\lambda_0, \mu_0)$ where we \emph{can} conclude that $\Delta_{P,Q} (M,
\lambda_0, \mu_0) =0$ in $H_K (q)$. For all these pairs one has
$\delta_i(\lambda_0,\mu_0)=0$ for all $i$, and the operators $\delta_i (P, Q)$
therefore have an infinite dimensional kernel. From this
Theorem~\ref{thm:cordimker} allows us to conclude that $\delta_i (P, Q) =0$ in
$H_K (q)$ for all $i$.

The first step, which consists of showing that there are infinitely many
$(\lambda_0, \mu_0)\in K\times K$ such that $\Delta(M,\lambda_0,\mu_0)=0$, is
the most involved. The idea is to exploit the fact that $v_{\lambda_0, \mu_0}$
is both in the kernel of $P - \lambda_0$ of order $m \geq 1$ and in the kernel
of the polynomial element $\Delta_{P,Q} ( M, \lambda_0, \mu_0)$ which, if it is
not zero, is not constant. This is a rare occasion. To be precise: for each $d$
we can describe the kernel of a non-constant polynomial element $p(M)$ of $H_K
(q)$ of degree at most $d$ and the action of $P- \lambda_0$ on it explicitly
enough to show that any $v_{\lambda_0, \mu_0}$ as above is in a subspace of
finite dimension which depends only on the leading coefficient of $P$ and on
$d$, but not on $\lambda_0$, $\mu_0$ or $p(M)$. This follows from
Theorem~\ref{simeigfin} below. Hence for the infinity of different pairs
$(\lambda_0, \mu_0)$ that can be shown to exist in the simultaneous point
spectrum\footnote{Here and elsewhere the point spectrum is defined as the set
of eigenvalues.}, it can, by linear independence of the corresponding
eigenvectors, only for finitely many pairs be the case that $\Delta_{P,Q} (M,
\lambda_0, \mu_0)$ is not constant. For the remaining infinite number of pairs
we must have that $\Delta_{P,Q} (M, \lambda_0,\mu_0)$ is zero.

\begin{rmk}
In the original work of Burchnall and Chaundy \cite{Burchnall&Chaundy1}, where
they consider differential operators with polynomial coefficients acting on
real or complex valued functions, the situation is considerably simpler. This
is not so much caused by the fact that they can (and do) use existence and
uniqueness results for ordinary differential equations, but by the fact that
the ordinary differentiation $D_1$ is translation invariant, whereas $D_q$
$(q\neq 1)$ is not. We will now explain why this is such a serious
complication for the strategy of the proof. The reader who is mostly interested in the established results per se can safely skip this Remark, which is primarily intended for readers who consider applying similar techniques in other cases.

It will become apparent below that, for an approach in the vein of Burchnall
and Chaundy to succeed, one needs a faithful representation of $H_K(q)$ in
which an arbitrary non-constant $P\in H_K(q)$ has an infinite point spectrum.
Without this the whole construction falls apart. How can one obtain such a
representation? In the work of Burchnall and Chaundy the smooth functions
provide such a module and the basic results about differential equations
provide the infinite point spectrum. In our general case we do not have such
results available, but there is an obvious attempt to obtain a substitute,
namely by working with formal power series. Already for Burchnall and Chaundy
themselves it would have been possible to do this and obtain the infinite point
spectrum directly, without an appeal to general theorems about differential
equations. Of course there are matters of convergence to be taken care of,
because in their proof it is necessary to evaluate solutions and their
derivatives in a point, but there is hope that a similar approach with formal
power series might somehow work in our case. However, there is an important
point here, which we have been deliberately sloppy about in the previous
sentences: a differential operator with polynomial coefficients has to be
sufficiently regular for these power series to exist as eigenfunctions, even
already as formal series. As an example, the only value of $\lambda\in\mathbb
C$ for which the operator $t^2d/dt-\lambda$ has a nontrivial kernel in the
formal power series, is $\lambda=0$. Hence in this module the point spectrum of
this operator is finite. In the case of Burchnall and Chaundy, this is not a
serious obstruction because one can simply use any point where the leading
coefficient of $P$ does not vanish as a base point for the formal power series.
In that suitably chosen module the point spectrum is infinite again and
corresponds to honest functions, as desired. The crux is that the translation
invariance of $d/dt$ is used here and that in our case this does not work any
longer. Surely the leading coefficient of $P$ can be written as a linear
combination of terms of the form $(t-a)^i$ where $a$ is chosen such that the
leading coefficient does not vanish at $a$ -- and such $a$ exist because $K$ is
infinite -- but for $q\neq 1$ the operator $D_q$ is not particulary well
behaved as far as its action on the $(t-a)^i$ is concerned. The operator $M^2
D_q$ has only zero as point spectrum in the formal power series with
coefficients in $K$ and for $q\neq 1$ there seems no way to remedy this by
choosing another base point to work with. Hence one has to pass to a larger
module, such as the formal Laurent series as we have introduced above, where
the point spectrum can be shown to be infinite again. However, in that case a
new complication appears as compared to the original context of Burchnall and
Chaundy, namely that a non-zero polynomial may have a non-trivial kernel when
acting on the Laurent series, cf.\ Proposition~\ref{baspsi}. In the sketch of
our proof preceding this remark this prohibits us from concluding that
$\Delta_{P,Q} (M, \lambda_0,\mu_0)=0$ once we know that $\Delta_{P,Q} (M,
\lambda_0,\mu_0)v_{\lambda_0,\mu_0}=0$. If $v_{\lambda_0,\mu_0}$ were a formal
power series, then this \emph{could} be concluded and the proof would be
relatively short and close to the original work of Burchnall and Chaundy, but
as explained above, as a consequence of the fact that $D_q$ is not translation
invariant for $q\neq 1$ we were forced to leave this context of formal power
series in order to ensure that the point spectrum of a non-constant element of
$H_K(q)$ is infinite.

It is in this way that it becomes necessary, in the end, to analyse the
situation in more detail and exploit the fact that $v_{\lambda_0,\mu_0}$ is not
only annihilated by $\Delta_{P,Q} (M, \lambda_0,\mu_0)$, but is also in the
kernel of an element of $H_K(q)$ of order at least one. This leads to
Theorem~\ref{simeigfin} and establishing this theorem complicates the proof
considerably as compared to the original argument by Burchnall and Chaundy.

To conclude this remark, we mention that for $q=1$, where $D_1$ is translation
invariant, it turns out that it \emph{is} possible to work with formal power
series with a suitable base point. Theorem~\ref{simeigfin} is then not needed,
but since this result is informative in the case $q=1$ as well, and since the
presentation would only be lengthened by covering this case separately, we have
chosen to give a uniform treatment with Laurent series including the easier
case $q=1$.
\end{rmk}

The structure of the remainder of the paper is as follows. In
Section~\ref{sec:dimension} we show that non-constant elements in $H_K (q)$
have finite dimensional eigenspaces and infinite spectrum when acting on $\mathcal{L}$. The finite
dimensionality will be seen to follow from the assumption that the $\{n\}_q$
are all different. Section~\ref{sec:kerpol} contains an analysis of the kernel
of non-constant polynomial elements $p(M)$ in $H_K (q)$. These kernels are
spanned by certain elements $\Psi_{\alpha, s}$ in $\mathcal{L}$ where $\alpha
\in K^*$ and $s = 1, 2, 3, \ldots$. In Section~\ref{sec:parord} we introduce a
partial ordering on the indices $(\alpha, s)$ and we analyze the action of an
arbitrary $P \in H_K (q)$ on the $\Psi_{\alpha, s}$ in terms of this partial
order. These results are then used in Section~\ref{sec:simultaneous} in order
to arrive at the finite dimensional space mentioned above.
Section~\ref{sec:proof} contains the details of the conclusion of the proof as
it has been sketched in the current section.

\section{Dimension of eigenspaces and infinity of the point spectrum}\label{sec:dimension}
For $P \in H_K (q)$, let $\sigma (P)$ denote the point spectrum of $P$ in
$\mathcal{L}$, i.e., the set of eigenvalues. Under our standing assumption that
$q \neq 0$ and $\{n\}_q \neq 0$ if $n \neq 0$, we will show that for
non-constant $P$ all eigenspaces have finite dimension
(Theorem~\ref{thm:cordimker}) and that the point spectrum is infinite
(Theorem~\ref{eigspec}). Although it is not needed for the proof of
Theorem~\ref{mainthm}, as a side result we will also establish in
\eqref{eq:uniformbound} a uniform upper bound for the dimension of all
eigenspaces of a fixed non-constant $P$.

Let $P = \sum_{j=0}^m p_j (M)
D_q^j\,\,(m \geq 0)$,
where $p_j (M) = \sum_i p_{j,i} M^i$ and $p_m\neq 0$. Then
clearly, for all $k \in \mathbb{Z}$,
\[P t^k =
\sum_{d} \left(\sum_{i-j = d} p_{j,i} \{k\}_{q} \{k-1\}_{q} \ldots \{k-j+1\}_{q}\right)
t^{d+k}.\] Here the product $\{k\}_{q} \{k-1\}_{q} \ldots \{k-j+1\}_{q}$ should
be interpreted as $1$ if $j =0$.

Let $\beta_d (k) = \sum_{i-j = d} p_{j,i} \{k\}_{q} \{k-1\}_{q} \ldots
\{k-j+1\}_{q} \,\,(k, d \in \mathbb{Z})$. The function $\beta_d : \mathbb{Z}
\rightarrow K$ describes the action of the homogeneous part of $P$ of degree
$d$ on $t^k$. Say that a homogeneous degree $d$ \emph{occurs in $P$} if there
exist $i, j$ with $i-j = d$ such that $p_{j,i} \neq 0$. Say that a homogeneous
degree $d$ \emph{occurs in $P$ with a differentiation} if there exist $i, j$
with $i-j =d$, $p_{j,i} \neq 0$ and $j \geq 1$. Obviously, only finitely many
homogeneous degrees occur in $P$. Now iteration of the recursion $\{n-1\}_{q} =
\frac{\{n\}_q -1}{q}$ shows that there exist polynomials $r_j$ of precise
degree $j$, with coefficients in $\mathbb{Z} [q, q^{-1}]$, such that $\{k\}_{q}
\{k-1\}_{q} \ldots \{k-j+1\}_{q} = r_j (\{k\}_{q})$ for all $k$. Hence $\beta_d
(k) = \sum_{i-j = d} p_{j, i}r_j (\{k\}_{q})$ is a polynomial in $\{k\}_{q}$ of
maximal degree $m$. Suppose that the homogeneous degree $d$ occurs in $P$. If
it does not occur with a differentiation, then $\beta_d (k) = p_{0,d}$ is a
non-zero constant function of $k$. If it does occur with a differentiation
then, since the degree of $r_j$ is precisely $j$, $\beta_d (k)$ is a
non-constant polynomial in $\{k\}_{q}$ of maximal degree $m$. Since the
$\{n\}_q$ are all different, $\beta_d (k)$ therefore assumes each value in $K$
at most $m$ times. In particular, it has a finite number of zeroes in
$\mathbb{Z}$. This establishes the following result.
\begin{lem}\label{betalem}
Suppose $P\neq 0$. Then the following are equivalent for $d \in \mathbb{Z}$:
\begin{enumerate}
\item the homogeneous degree $d$ occurs in $P$; \item $\beta_d \neq 0$; \item
$\beta_d$ has only finitely many zeroes in $\mathbb{Z}$.
\end{enumerate}
If the homogeneous degree $d$ occurs in $P$, then
there are two possibilities:
\begin{enumerate}
\item $\beta_d$ is a non-zero constant.
This happens precisely when $d$ occurs in $P$,
but not with a differentiation.
\item $\beta_d$ is not constant. This happens
precisely when $d$ occurs with a differentiation.
In this case, $\beta_d$ has at most $m$ zeroes
in $\mathbb{Z}$ and its range is countably infinite.
\end{enumerate}
\end{lem}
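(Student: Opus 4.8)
The plan is to extract all the relevant information from the explicit formula for $P t^k$ derived just above the statement. Writing $P = \sum_{j=0}^m p_j(M) D_q^j$ with $p_j(M) = \sum_i p_{j,i} M^i$ and $p_m \neq 0$, I would first record the two structural facts already observed in the text: only finitely many homogeneous degrees occur in $P$ (since each $p_j$ is a polynomial, hence has finitely many nonzero coefficients, and there are only $m+1$ values of $j$), and there exist polynomials $r_j \in \mathbb{Z}[q,q^{-1}][Y]$ of \emph{precise} degree $j$ with $\{k\}_q\{k-1\}_q\cdots\{k-j+1\}_q = r_j(\{k\}_q)$ for all $k\in\mathbb Z$ (obtained by iterating $\{n-1\}_q = (\{n\}_q - 1)/q$, which is legitimate since $q\neq 0$). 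Consequently $\beta_d(k) = \sum_{i-j=d} p_{j,i}\, r_j(\{k\}_q)$ is a polynomial expression in $\{k\}_q$ of degree at most $m$.

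Next I would prove the equivalence of (1), (2), (3). For $(1)\Rightarrow(2)$: if $d$ occurs in $P$, split into two cases. If $d$ occurs without a differentiation, then among the pairs $(i,j)$ with $i-j=d$ and $p_{j,i}\neq 0$ there is one with $j=0$, contributing the nonzero constant $p_{0,d} r_0 = p_{0,d}$; any other contribution comes from $j\geq 1$, i.e.\ from a $\mathbb{Z}$-linear combination of the $r_j$ with $j\geq 1$ — but here I must be slightly careful: the claim in the Lemma is that if $d$ occurs but \emph{not} with a differentiation then $\beta_d$ is exactly the constant $p_{0,d}$, which forces there to be no $j\geq 1$ terms at all, consistent with the definition of ``occurs with a differentiation.'' So in that case $\beta_d \equiv p_{0,d}\neq 0$. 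If $d$ occurs with a differentiation, pick the largest $j^* \geq 1$ with $p_{j^*, d+j^*} \neq 0$; since $\deg r_{j^*} = j^*$ precisely and the $r_j$ for $j < j^*$ have strictly smaller degree, the polynomial $\sum_{i-j=d} p_{j,i} r_j$ in the variable $Y = \{k\}_q$ has degree exactly $j^* \geq 1$, hence is a non-constant polynomial in $Y$. Now invoke Remark~\ref{nonunroot}(2) (which uses Assumption~\ref{assumptionq}): the values $\{k\}_q$ for $k\in\mathbb Z$ are pairwise distinct, so $k\mapsto\{k\}_q$ is injective; a nonzero polynomial of degree $e$ over the field $K$ has at most $e$ roots, so $\beta_d$ vanishes for at most $j^* \leq m$ integers $k$, and in particular $\beta_d \neq 0$ and has finitely many zeroes. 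This simultaneously gives $(1)\Rightarrow(3)$ and the refined dichotomy at the end of the Lemma, including the bound of $m$ zeroes in the non-constant case. For the range being countably infinite in the non-constant case: $K$ is infinite (Remark~\ref{nonunroot}), the set $\{\{k\}_q : k\in\mathbb Z\}$ is countably infinite by injectivity, and a non-constant polynomial takes infinitely many distinct values on an infinite set (if it took only finitely many values $c_1,\dots,c_r$ on an infinite set $S$, then some $\beta_d - c_i$ would have infinitely many roots, impossible), so the range is infinite; it is countable since it is the image of a countable set.

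For the reverse implications: $(2)\Rightarrow(1)$ is the contrapositive of the trivial observation that if $d$ does not occur in $P$ then every $p_{j,i}$ with $i-j=d$ is zero, so $\beta_d\equiv 0$. And $(3)\Rightarrow(2)$ is immediate since $\beta_d\equiv 0$ has infinitely many zeroes in $\mathbb Z$ (as $\mathbb Z$ is infinite). This closes the cycle $(1)\Rightarrow(3)\Rightarrow(2)\Rightarrow(1)$, with the $(1)\Rightarrow(2)$ arrow also obtained en route.

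The only genuinely non-routine point — really the crux of why Assumption~\ref{assumptionq} is needed here — is the passage from ``$\beta_d$ is a non-constant polynomial in the symbol $\{k\}_q$'' to ``$\beta_d$, as a function of $k\in\mathbb Z$, is non-constant with finitely many zeroes.'' This is precisely where injectivity of $k\mapsto\{k\}_q$ (equivalently, that $q$ is not a root of unity, modulo characteristic) enters, via Remark~\ref{nonunroot}(2); without it the substitution could collapse a non-constant polynomial to a constant (or even zero) function, and the dimension-of-eigenspaces argument downstream would break. Everything else is bookkeeping with the degree-$j$ property of the $r_j$ and the finiteness of the support of $P$.
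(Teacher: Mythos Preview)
Your proof is correct and follows essentially the same approach as the paper: both arguments rest on writing $\beta_d(k)$ as a polynomial in $\{k\}_q$ via the polynomials $r_j$ of precise degree $j$, then using the injectivity of $k\mapsto\{k\}_q$ (from Assumption~\ref{assumptionq}) to transfer statements about polynomials in one variable to statements about the function $\beta_d$ on $\mathbb Z$. Your version is somewhat more explicit---you spell out the equivalence cycle $(1)\Rightarrow(3)\Rightarrow(2)\Rightarrow(1)$, isolate the maximal $j^*$ to pin down the exact degree, and justify the countably-infinite-range claim---whereas the paper compresses all of this into the paragraph preceding the lemma, but the underlying ideas are identical.
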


We will now analyze the kernel of $P$.
The coefficient of $t^j$ in $P \sum_n a_n t^n$
is clearly equal to $\sum_n a_n \beta_{j-n} (n)$,
so the series $\sum_n a_n t^n$ is in the kernel
of $P$ if and only if $\sum_n \beta_{j-n}(n) a_n =0$
for all $j$. The structure of this system becomes
more transparent if we write
$\gamma_{k,l} = \beta_{k-l} (l)$
$(k, l \in \mathbb{Z})$; it then reads as $\sum_l \gamma_{k,l} a_l =0$ for
all $k \in \mathbb{Z}$. Let $\Gamma$ be the
matrix $(\gamma_{k,l})$, where we think of $\Gamma$
as being realized on $\mathbb{Z}^2$, placing the
entry $\gamma_{k,l}$ in the lattice point $(l,k)$:

\begin{picture}(80,80)(-120,0)
\put(40,65){$k$}
\put(74,18){$l$}
\put(10,20){\vector(1,0){60}}
\put(40,0){\vector(0,1){60}}
\end{picture}

The equation $\sum_l \gamma_{k,l} a_l =0$ then corresponds to the rows in
$\Gamma$ at horizontal level $k$ acting on an infinite vector $(\ldots, a_{-2},
a_{-1}, a_0, a_1, a_2, \ldots)$ in the usual way. We now look at the matrix
$\Gamma$ along a diagonal $k = l+d$ with $d$ fixed. For such pairs $(k, l)$,
one has $\gamma_{k,l} = \beta_d (l)$. Since only finitely many $d$ occur in
$P$, $\Gamma$ is a band matrix and, moreover, according to Lemma~\ref{betalem}
each diagonal is either identically zero or else contains at most $m$ zeroes.

Suppose $P \neq 0$ and define $d_{\max} = \max\{d: \beta_d \neq 0\}$ and
$d_{\min} = \min \{d: \beta_d \neq 0\}$; these integers correspond to the upper
and lower boundary diagonal of the band in $\Gamma$, respectively. If $d_{\max}
= d_{\min}$, so that there is only one diagonal to consider, then $\dim \ker P
= \# \{k : \beta_d (k) =0\} \leq m$. If $d_{\max} > d_{\min}$, then, since each
of the boundary diagonals contain only finitely many zeroes, it is possible to
determine a (not uniquely determined) finite submatrix $\widetilde{\Gamma}$ as
indicated:

\begin{picture}(100,180)(0,-110)
\put(60,-50){\framebox(200,60)}
\put(60,-50){\line(1,1){100}}
\thicklines
\put(40,-70){\line(1,1){20}}
\thinlines
\put(160, -90){\line(1,1){100}}
\thicklines
\put(260,10){\line(1,1){20}}
\put(140, -70){$\widetilde{\Gamma}$}
\put(190, -70){$k = l + d_{\min}$}
\put(80, 30){$k = l + d_{\max}$}
\thinlines
\put(50, -85){\vector(0,1){20}}
\put(40, -95){$\textup{No zeroes here}$}
\put(270, 43){\vector(0,-1){20}}
\put(250, 50){$\textup{No zeroes here}$}
\put(240, -40){$0$}
\put(70, -20){$0$}
\end{picture}

The relevant features here are that the only non-zero elements occur on the
boundary diagonals and in the band between them, and that there are no zeroes on the
indicated lower part of the upper diagonal and on the indicated upper part of
the lower diagonal. A moment's thought shows that the kernel of $P$ and the
kernel of $\widetilde{\Gamma}$ are isomorphic: an isomorphism is given by
selecting the coordinates corresponding to all columns of $\widetilde{\Gamma}$
from the infinite vector representing an element of the kernel of $P$ and thus obtain an element of the kernel of $\widetilde\Gamma$. The injectivity and the surjectivity of this map are both consequences of the
non-zero elements on the boundary diagonals as indicated. Namely, on the lower
diagonal they enable the necessary unique downward extension of an element in the
kernel of $\widetilde{\Gamma}$ to an infinite column vector in the kernel of $\Gamma$, and on
the upper diagonal they enable the necessary unique upward extension.

We conclude that, if $d_{\max} > d_{\min}$, then $P$ has a finite dimensional and non-trivial kernel. Since we had already concluded that the kernel is finite dimensional in the case $d_{\max}=d_{\min}$, we have arrived at the following result.

\begin{thm}\label{thm:cordimker}
If $P \in H_K (q)$, then $\dim \ker (P) = \infty$ if and only if $P=0$.
\end{thm}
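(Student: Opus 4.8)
The plan is to prove the contrapositive of the substantive implication: \emph{if $P\neq 0$, then $\dim\ker(P)<\infty$}. The reverse implication requires nothing, since $\ker(0)=\mathcal L$ and the space of formal Laurent series in one variable is evidently infinite dimensional.

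So suppose $P\neq 0$. I would use the description already set up above: writing $P$ in its homogeneous pieces, membership $\sum_n a_nt^n\in\ker P$ is equivalent to the infinite linear system $\sum_l\gamma_{k,l}a_l=0$ $(k\in\mathbb Z)$ with $\gamma_{k,l}=\beta_{k-l}(l)$, whose matrix $\Gamma$ is a band matrix because only finitely many homogeneous degrees occur in $P$; moreover by Lemma~\ref{betalem} each diagonal of $\Gamma$ is either identically zero or has at most $m$ zeros. Put $d_{\max}=\max\{d:\beta_d\neq 0\}$ and $d_{\min}=\min\{d:\beta_d\neq 0\}$.

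First I would treat the degenerate case $d_{\max}=d_{\min}=:d$. Then $\Gamma$ has a single nonzero diagonal, the system collapses to $\beta_d(l)a_l=0$ for every $l$, so $a_l=0$ whenever $\beta_d(l)\neq 0$; by Lemma~\ref{betalem} $\beta_d$ is either a nonzero constant or a nonconstant polynomial in $\{k\}_q$ with at most $m$ zeros in $\mathbb Z$, and in either case $\dim\ker(P)=\#\{l:\beta_d(l)=0\}\leq m$. For the remaining case $d_{\max}>d_{\min}$ the point is to cut the infinite system down to a finite one. Since the two boundary diagonals $\beta_{d_{\max}}$ and $\beta_{d_{\min}}$ each vanish only finitely often, one can choose a finite rectangular block $\widetilde\Gamma$ of $\Gamma$ so large that below it the leftmost band coefficient on each row, namely a value of $\beta_{d_{\max}}$, is nonzero, and above it the rightmost band coefficient on each row, namely a value of $\beta_{d_{\min}}$, is nonzero. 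Restricting a solution of the full system to the coordinates indexed by the columns of $\widetilde\Gamma$ maps $\ker P$ linearly into $\ker\widetilde\Gamma$, and I would show this map is an isomorphism: a solution of the finite block extends uniquely both downward --- each equation below the block, read in order, involves exactly one previously undetermined $a_l$, whose coefficient $\beta_{d_{\max}}(l)$ is nonzero, so it pins that $a_l$ down --- and, symmetrically, upward, using the nonzero coefficients of $\beta_{d_{\min}}$. As $\widetilde\Gamma$ is a finite matrix this yields $\dim\ker(P)=\dim\ker\widetilde\Gamma<\infty$.

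The only genuine obstacle is this last reduction. Everything hinges on being able to choose $\widetilde\Gamma$ so that the two boundary diagonals have no zeros outside it, which is exactly the content of Lemma~\ref{betalem}; granting that, the equivalence between the finite and the infinite kernel is a routine, if slightly fussy, unique-extension argument of the kind already carried out in the discussion preceding the statement, and I do not expect any further difficulty.
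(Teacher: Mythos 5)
Your proposal is correct and follows essentially the same route as the paper: the trivial direction for $P=0$, then for $P\neq 0$ the reduction to the band matrix $\Gamma$ with diagonals $\beta_d$, the case $d_{\max}=d_{\min}$ handled directly via the at-most-$m$-zeroes bound from Lemma~\ref{betalem}, and the case $d_{\max}>d_{\min}$ handled by cutting out a finite block $\widetilde\Gamma$ beyond which the two boundary diagonals never vanish, so that restriction to the block's columns is an isomorphism onto $\ker\widetilde\Gamma$ via unique extension in both directions. The only (harmless) cosmetic difference is in the orientation of ``downward/upward'' and which boundary diagonal you attach to which direction of extension; your bookkeeping in terms of rows of the matrix is consistent and matches the paper's in substance.
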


In fact, although we will not need this, we can be more precise. Let $N_{\max}
= \# \{l : \beta_{\max} (l) =0\}$, $N_{\min} = \# \{l : \beta_{\min} (l) =0\}$.
By Lemma~\ref{betalem}, $N_{\max}, N_{\min} \leq m$. We may assume that all the
zeroes on the boundary diagonals lie in $\widetilde{\Gamma}$, and since the
number of non-zero elements in each of the parts of the boundary diagonals that
are contained in $\widetilde{\Gamma}$ gives a lower bound for the rank of
$\widetilde{\Gamma}$, one easily derives that
\[\dim \ker P \leq d_{\max} - d_{\min} + \min (N_{\max}, N_{\min}).\]
Note that this is also true if $d_{\max} = d_{\min}$ (with equality). It is
even more elementary to see that $d_{\max} - d_{\min} \leq \dim \ker P$, and we
thus obtain the following result under our standard assumption on the
$\{n\}_q$.

\begin{pro}\label{prop:dimker}
If $0 \neq P \in H_K (q)$, then \[d_{\max} - d_{\min} \leq \dim \ker P \leq d_{\max} - d_{\min} + \min(N_{\max}, N_{\min}) \leq d_{\max} - d_{\min} +m.\]
\end{pro}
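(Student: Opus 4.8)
The plan is to take the reduction already carried out in the discussion preceding the statement and make the auxiliary finite matrix explicit enough to read off both inequalities directly. First I would dispose of the degenerate case $d_{\max}=d_{\min}=:d$: then $\Gamma$ has a single non-zero diagonal, so $\sum_n a_n t^n\in\ker P$ exactly when $a_l=0$ for every $l$ with $\beta_d(l)\neq 0$; hence $\dim\ker P=\#\{l:\beta_d(l)=0\}=N_{\max}=N_{\min}$, and since $d$ occurs in $P$, Lemma~\ref{betalem} gives $N_{\max}\le m$, so all the asserted inequalities hold.

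Assume now $d_{\max}>d_{\min}$. I would fix $\widetilde\Gamma$ more concretely than in the proof of Theorem~\ref{thm:cordimker}: take its columns to be an interval $[c_{\min},c_{\max}]$ of $\mathbb Z$ large enough to contain every zero of $\beta_{d_{\max}}$ and of $\beta_{d_{\min}}$ (possible by Lemma~\ref{betalem}), and take its rows to be exactly those rows of $\Gamma$ whose support is contained in these columns. Since row $k$ of $\Gamma$ is supported in columns $[k-d_{\max},k-d_{\min}]$, a short count shows there are exactly $R:=C-(d_{\max}-d_{\min})$ such rows, where $C:=c_{\max}-c_{\min}+1$. As in Theorem~\ref{thm:cordimker}, restriction to the columns of $\widetilde\Gamma$ gives an isomorphism $\ker P\cong\ker\widetilde\Gamma$: since all zeros of the boundary diagonals lie among the columns of $\widetilde\Gamma$, the coefficients on the outermost columns of the remaining rows of $\Gamma$ are non-zero, which is exactly what is needed to extend any element of $\ker\widetilde\Gamma$ uniquely downward and upward, and to force any element of $\ker\Gamma$ vanishing on $[c_{\min},c_{\max}]$ to vanish everywhere.

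The lower bound now follows at once, since $\dim\ker P=\dim\ker\widetilde\Gamma\ge C-R=d_{\max}-d_{\min}$. For the upper bound I would bound $\operatorname{rank}\widetilde\Gamma$ from below by an echelon argument; assume without loss of generality that $N_{\max}\le N_{\min}$. The leftmost column in which row $k$ of $\widetilde\Gamma$ can have a non-zero entry is $k-d_{\max}$, with entry $\beta_{d_{\max}}(k-d_{\max})$, and these columns are pairwise distinct as $k$ varies. Discarding the at most $N_{\max}$ rows for which this entry vanishes, the remaining rows have pairwise distinct leftmost non-zero columns and hence are linearly independent, so $\operatorname{rank}\widetilde\Gamma\ge R-N_{\max}$. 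Therefore $\dim\ker P=C-\operatorname{rank}\widetilde\Gamma\le(C-R)+N_{\max}=(d_{\max}-d_{\min})+\min(N_{\max},N_{\min})$, and $\min(N_{\max},N_{\min})\le m$ by Lemma~\ref{betalem}.

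The one point that genuinely needs care — and the step I would present most carefully — is that the two demands on $\widetilde\Gamma$ are compatible: the rank estimate wants every zero of $\beta_{d_{\max}}$ and $\beta_{d_{\min}}$ to lie among its columns, while the isomorphism with $\ker P$ wants no such zero on the part of a boundary diagonal lying outside $\widetilde\Gamma$; these are one and the same condition once $\widetilde\Gamma$ is taken with an interval of columns as above and chosen sufficiently large. Everything else is routine bookkeeping with the band structure, already indicated in the text preceding the statement.
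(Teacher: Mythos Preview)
Your proof is correct and follows essentially the same approach as the paper: the paper also reduces to a finite $\widetilde\Gamma$ containing all the zeros of the boundary diagonals, and obtains the upper bound by observing that the number of non-zero entries on either part of the boundary diagonals inside $\widetilde\Gamma$ is a lower bound for its rank. You have simply made explicit the row/column count and the echelon argument that the paper leaves as ``one easily derives''.
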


It is now also easy to see that, for non-constant $P$, there is a uniform bound
for $\dim \ker (P - \lambda)$. Since this corresponds to adding $- \lambda$ to
the diagonal of $\Gamma$, the relevant numbers $d_{\max} (\lambda)$ and
$d_{\min} (\lambda)$ can attain only a finite number of values, the number of
which depends on the position of the boundary diagonals in $\Gamma$ and, also,
if the main diagonal $k = l$ is one of the boundary diagonals of the band in
$\Gamma$, on $\Gamma$ being constant along this main diagonal or not.
Distinguishing various possibilities one obtains that, for $P \in H_K (q)$ not
constant,
\begin{equation}\label{eq:uniformbound}
\dim \ker (P - \lambda) \leq |d_{\max}| + |d_{\min}| + m
\end{equation}
for all $\lambda \in K$.

Returning to the main line, we will now establish an important result.

\begin{thm}\label{eigspec}
If $P \in H_K (q)$ is not constant, then $\sigma (P)$ is infinite.
\end{thm}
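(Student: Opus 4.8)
The plan is to show that $\sigma(P)$ is infinite by combining the band-matrix structure of $\Gamma$ with a counting argument, using that $K$ is infinite (Remark~\ref{nonunroot}). First I would reduce to the case where $P$ has order $m\geq 1$: if $P$ has order $0$ but is non-constant, then $P=p(M)$ with $\deg p\geq 1$, and one checks directly that $p(M)$ acting on $\mathcal L$ has every $\lambda$ in the image of $p$ on $K^*$ (or more carefully, on the relevant orbit) as an eigenvalue via geometric-series-type eigenvectors $\Psi_{\alpha,1}$; since $K$ is infinite this already gives infinitely many eigenvalues. So assume $m\geq 1$.

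For $m\geq 1$, the key observation is the one already set up in this section: for $\lambda\in K$, the eigenvalue equation $(P-\lambda)v=0$ corresponds to the matrix $\Gamma-\lambda\,\mathrm{diag}$, i.e.\ we subtract $\lambda$ from the main-diagonal entries $\gamma_{l,l}=\beta_0(l)$. Now $\lambda\in\sigma(P)$ precisely when this modified matrix has non-trivial kernel. There are two cases. If the homogeneous degree $0$ does not occur in $P$ with a differentiation, or $d_{\min}<0<d_{\max}$ so that the main diagonal is strictly interior to the band, then subtracting $\lambda$ does not change $d_{\max}$ or $d_{\min}$, and the analysis leading to Theorem~\ref{thm:cordimker} shows $\dim\ker(P-\lambda)\geq d_{\max}-d_{\min}\geq 1$ for \emph{every} $\lambda\in K$; since $K$ is infinite we are done immediately. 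The remaining case is when the main diagonal $k=l$ is a boundary diagonal of the band — say $d_{\min}=0$ (the case $d_{\max}=0$ is symmetric) — and degree $0$ occurs in $P$ with a differentiation, so $\beta_0$ is a non-constant polynomial in $\{l\}_q$ by Lemma~\ref{betalem}. Here I would argue: choose $\lambda$ in $K$ \emph{avoiding} the finitely many values $\beta_0(l)$, $l\in\mathbb Z$ (possible since $K$ is infinite and $\beta_0$ takes only countably — indeed, each value at most $m$ times, but in any case only countably — many values, while... hmm, $K$ need only be infinite, not uncountable). This is the delicate point, so let me restructure.

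The cleaner route, and the one I would actually pursue, is to exhibit eigenvectors explicitly rather than argue abstractly about $\dim\ker$. When the main diagonal is a boundary diagonal, say $d_{\min}=0$, pick any $\lambda\in K$ with $\beta_0(l)-\lambda\neq 0$ for all $l\in\mathbb Z$; for such $\lambda$ the lower boundary diagonal of $\Gamma-\lambda\,\mathrm{diag}$ has \emph{no} zeroes, so the downward-extension argument from the proof of Theorem~\ref{thm:cordimker} shows that \emph{any} prescribed "top" data extends uniquely, giving $\dim\ker(P-\lambda)=d_{\max}-d_{\min}=d_{\max}\geq 1$. It remains to produce infinitely many such $\lambda$. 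For this I would use Lemma~\ref{betalem}(2): $\beta_0$ assumes each value in $K$ at most $m$ times, so its range is a countable subset of $K$; but $K$ is merely infinite, so I cannot simply say "the complement is non-empty" unless... Actually one can: the range of $\beta_0$ restricted to $\mathbb Z$ is a countable set, and if $K$ were countable this would be a genuine obstacle. The fix is to first pass to an infinite subset of admissible $\lambda$ directly: the polynomial $\beta_0(l)=r(\{l\}_q)$ for a polynomial $r\in K[Y]$ of degree $m\geq 1$; the map $l\mapsto\{l\}_q$ is injective (Remark~\ref{nonunroot}(2)), so $\{\beta_0(l):l\in\mathbb Z\}=\{r(y):y\in S\}$ where $S=\{\{l\}_q\}$ is infinite. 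We need $\lambda\notin r(S)$; equivalently, $\lambda$ such that $r(Y)-\lambda$ has no root in $S$. Since $r-\lambda$ has at most $m$ roots in $K$, at most $m$ elements of $S$ can be roots; hence $r(S)$ has infinitely many elements, but that still doesn't give a single $\lambda$ outside it unless $K\setminus r(S)\neq\emptyset$.

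So the genuine main obstacle is purely this cardinality subtlety, and I expect the paper resolves it by a slightly different device: namely, rather than avoiding the range of $\beta_0$, use that if $d_{\max}>d_{\min}$ then $\dim\ker(P-\lambda)\geq d_{\max}(\lambda)-d_{\min}(\lambda)\geq d_{\max}-d_{\min}-0\geq 1$ holds as soon as subtracting $\lambda$ does not collapse a boundary diagonal to zero, and a boundary diagonal is $\beta_{d_{\max}}$ or $\beta_{d_{\min}}$ — if \emph{neither} of these is the main diagonal, we are done for all $\lambda$; if one of them, say $\beta_{d_{\min}}$, \emph{is} the main diagonal and is non-constant, then for all but at most $m$ values of $\lambda$ (those in $r(S)$, and even then the diagonal only acquires finitely many extra zeroes, not becoming identically zero — it is a fixed non-zero polynomial minus a constant, hence still has only finitely many zeroes!) the conclusion of Theorem~\ref{thm:cordimker}'s analysis applies and $\dim\ker(P-\lambda)\geq 1$. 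That last parenthetical is the crux: $\beta_0(l)-\lambda$ is, for every $\lambda$, a non-constant polynomial in $\{l\}_q$ when degree $0$ occurs with a differentiation, hence has only finitely many zeroes in $\mathbb Z$ by Lemma~\ref{betalem} applied to $P-\lambda$ — so the boundary diagonal is never identically zero, the band structure persists for \emph{every} $\lambda\in K$, and therefore $\dim\ker(P-\lambda)\geq d_{\max}-d_{\min}\geq 1$ for all $\lambda\in K$. Thus in every case $\sigma(P)=K$ when $m\geq 1$ and $d_{\max}>d_{\min}$; when $d_{\max}=d_{\min}$ (a single diagonal, necessarily the main one since $m\geq 1$ forces a differentiation... — one handles this small case by hand, getting $\sigma(P)\supseteq\{\beta_0(l):l\in\mathbb Z\}$, which is infinite since $\beta_0$ is non-constant). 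Since $K$ is infinite, $\sigma(P)$ is infinite in all cases, completing the proof.
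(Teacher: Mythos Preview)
Your approach is the same as the paper's --- use the band structure of $\Gamma$ together with the lower bound $\dim\ker(P-\lambda)\ge d_{\max}(\lambda)-d_{\min}(\lambda)$ --- but your case split is tangled and leaves a genuine gap.

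The gap is in your final case $d_{\max}=d_{\min}$ with $m\ge 1$. You assert the single diagonal is ``necessarily the main one since $m\ge 1$ forces a differentiation'', and then conclude $\sigma(P)\supseteq\{\beta_0(l):l\in\mathbb Z\}$. This is false: take $P=M^2 D_q$. Here $m=1$, and the only homogeneous degree is $d=1$, so $d_{\max}=d_{\min}=1\neq 0$ and $\beta_0\equiv 0$; your set $\{\beta_0(l)\}=\{0\}$ is a singleton, and the argument collapses. The fix is immediate: for any $\lambda\neq 0$ the main diagonal of $P-\lambda$ is the non-zero constant $-\lambda$, so $P-\lambda$ has two non-vanishing diagonals (at $0$ and at $d$) and $\dim\ker(P-\lambda)\ge |d|\ge 1$. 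A second, minor slip: in your case $d_{\max}>d_{\min}$ you do not treat the subcase where the main diagonal is a boundary diagonal and $\beta_0$ is a \emph{constant} (degree $0$ occurs but without a differentiation). Then $\beta_0-\lambda$ vanishes identically for one value of $\lambda$, so your claim ``$\sigma(P)=K$'' is one point too strong; this is harmless for the conclusion but should be said.

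The paper's proof avoids both issues by a cleaner case split that you should adopt: either some homogeneous degree $d\neq 0$ occurs in $P$, or $P$ is homogeneous of degree $0$. In the first case $\Gamma$ has a non-vanishing off-main diagonal; since $\beta_0-\lambda$ can be identically zero for at most one value of $\lambda$, the matrix for $P-\lambda$ has at least two non-vanishing diagonals for all but at most one $\lambda$, whence $d_{\max}(\lambda)>d_{\min}(\lambda)$ and the kernel is non-trivial. In the second case $Pt^k=\beta_0(k)t^k$, so $\sigma(P)=\{\beta_0(k):k\in\mathbb Z\}$, and since $P$ is non-constant, $\beta_0$ is non-constant and this set is infinite by Lemma~\ref{betalem}. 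No separate treatment of $m=0$ versus $m\ge 1$ (and hence no appeal to the $\Psi_{\alpha,1}$ from Section~\ref{sec:kerpol}) is needed.
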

\begin{proof}
If a homogeneous degree $d \neq 0$ occurs in $P$, then the matrix $\Gamma$ has
a non-vanishing diagonal which is not the main diagonal. Therefore, the matrix
for $P - \lambda$ has two non-vanishing diagonals for all $\lambda\in K$ except
at most one value. Since $d_{\max} - d_{\min}>0$ for all matrices corresponding
to such non-exceptional $\lambda$, and we had already observed in Remark
\ref{nonunroot} that $K$ must be infinite, the theorem is established in this
case. If $P$ is homogeneous of degree zero then $\sigma (P) = \{\beta_0 (k) : k
\in \mathbb{Z}\}$ which is (countably) infinite according to
Lemma~\ref{betalem}, since $P$ is not constant.
\end{proof}

\section{The kernel of polynomial elements of $H_K (q)$}\label{sec:kerpol}
Throughout this section we assume that $K$ is algebraically closed, in addition
to our standard assumption that $q \neq 0$ and $\{n\}_q \neq 0$ if $n \neq 0$.
For arbitrary non-zero $p \in K[X]$, we will describe the kernel of the corresponding endomorphism $p(M)$ of $\mathcal L$ in terms of infinite Jordan blocks corresponding to the eigenvalues of the endomorphism $M$ of $\mathcal L$.

If $p(M) = c M^i$ for some $i \geq 0$ and $c \neq 0$, then $p(M)$ is an automorphism of $\mathcal{L}$. Thus, if $p(X) = c X^{e_0} \prod_{i \neq 0} (X- \alpha_i)^{e_i}$ is the factorization of $p$ with multiplicities, where $\alpha_i \in K^*$, then $\Ker (p(M))$ = $\Ker (\prod_{i \neq 0} (M-\alpha_i)^{e_i})$.
Now it is an easy exercise to show that for $\alpha \in K^*$ the map $(M-\alpha) : \mathcal{L} \rightarrow \mathcal{L}$ is surjective and that the element
\begin{equation}
\Psi_{\alpha, 1} = \sum_n \left(\frac{t}{\alpha}\right)^n \label{basker}
\end{equation}
is a basis for $\Ker (M- \alpha)$. We conclude that $\dim \Ker (p(M)) = \sum_{i \neq 0} e_i$.

We choose inductively $\Psi_{\alpha, s} (s = 2, 3, \ldots)$ in $\mathcal{L}$
such that $(M-\alpha) \Psi_{\alpha, s} = \Psi_{\alpha, s-1}$. The elements
$\Psi_{\alpha, s}$ corresponding to this infinite Jordan block are by no means
unique, but this is not serious and we fix such a choice once and for all, for
all $s \geq 2$ and $\alpha \in K^*$. The only normalization which we impose
is~\eqref{basker}.

Note that
\begin{equation}\left\{\begin{array}{lr}
(M-\alpha)^s \Psi_{\alpha,s} = 0, \,\,(s = 1, 2, \ldots ; \alpha \in K^*),\\
(M-\alpha)^{s-1} \Psi_{\alpha, s} = \Psi_{\alpha, 1},
\end{array}\right. \label{psigrejs}\end{equation}
and that $M^i \Psi_{\alpha, s} = \alpha^i \Psi_{\alpha,s} + \sum_{r < s}
c_{r,s} \Psi_{\alpha,r}$. We will repeatedly encounter similar formulas
containing a summation where the only role of the summation is to indicate the
subspace containing the sum. As a shorthand notation we will allow ourselves to
suppress the dependence of the scalars on the indices and write this as
\[\sum_{r < s} c \Psi_{\alpha, r},\]
and similarly in other situations. With this convention we have
\begin{equation}
p(M) \Psi_{\alpha,s} = p(\alpha) \Psi_{\alpha,s} + \sum_{r<s} c \Psi_{\alpha,
r} \,\, (\alpha \in K^*, s = 1, 2, \ldots\,, p \in K[X]).\label{konv}
\end{equation}
\begin{pro}\label{baspsi}
If $0 \neq p \in K[X]$ factors with multiplicities as
\[p(X) = c X^{e_0}\prod_{i \neq 0} (X - \alpha_i)^{e_i},\]
then:
\begin{enumerate}
\item $p(M) : \mathcal{L} \rightarrow \mathcal{L}$ is surjective;
\item $\dim \Ker(p(M)) = \sum_{i \neq 0} e_i$;
\item $\bigcup_{i \neq 0} \{\Psi_{\alpha_i,1},
\ldots, \Psi_{\alpha_i,e_i}\}$ is a basis for $\Ker (p(M))$.
\end{enumerate}
\end{pro}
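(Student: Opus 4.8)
The plan is to reduce everything to the case $p(X) = (X-\alpha)^e$ with $\alpha \in K^*$ and then build up the general case using the coprimality of the factors. First I would dispose of the reduction already sketched in the text: since $M$ is invertible (its inverse is the weighted shift that one checks directly on Laurent series, or simply because $M\Psi_{1,1}$-type computations show $M$ is bijective), the factor $cX^{e_0}$ contributes an automorphism, so $p(M)$ and $\prod_{i\neq 0}(M-\alpha_i)^{e_i}(M)$ have the same kernel and $p(M)$ is surjective if and only if the latter is. So it suffices to treat $\tilde p(X) = \prod_{i\neq 0}(X-\alpha_i)^{e_i}$ with all $\alpha_i \in K^*$ distinct.

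Next I would handle a single Jordan factor. For fixed $\alpha \in K^*$ I claim $(M-\alpha)^e$ is surjective with kernel exactly $\mathrm{span}\{\Psi_{\alpha,1},\ldots,\Psi_{\alpha,e}\}$, and that these vectors are linearly independent. Surjectivity follows by induction on $e$: the case $e=1$ is the stated easy exercise, and $(M-\alpha)^e = (M-\alpha)\circ(M-\alpha)^{e-1}$ is a composite of surjections. Linear independence of $\{\Psi_{\alpha,1},\ldots,\Psi_{\alpha,e}\}$ follows from \eqref{psigrejs}: if $\sum_{s=1}^e c_s\Psi_{\alpha,s}=0$, apply $(M-\alpha)^{e-1}$ to kill all terms with $s<e$ and get $c_e\Psi_{\alpha,1}=0$, hence $c_e=0$; repeat. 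That these $e$ vectors lie in $\ker(M-\alpha)^e$ is immediate from \eqref{psigrejs}. For the reverse inclusion — that the kernel is no bigger — I would argue by induction on $e$ again: if $(M-\alpha)^e v = 0$, then $(M-\alpha)^{e-1}v \in \ker(M-\alpha) = K\Psi_{\alpha,1}$, say $(M-\alpha)^{e-1}v = c\Psi_{\alpha,1} = (M-\alpha)^{e-1}(c\Psi_{\alpha,e})$, so $v - c\Psi_{\alpha,e} \in \ker(M-\alpha)^{e-1}$, which by induction lies in $\mathrm{span}\{\Psi_{\alpha,1},\ldots,\Psi_{\alpha,e-1}\}$. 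Hence $\dim\ker(M-\alpha)^e = e$.

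Now I would assemble the factors. Since the $\alpha_i$ are distinct and nonzero, the polynomials $(X-\alpha_i)^{e_i}$ are pairwise coprime in $K[X]$, so by the Chinese Remainder Theorem (Bézout) applied inside the commutative ring $K[M] \subseteq \End_K(\mathcal L)$, one gets $\ker\left(\prod_i(M-\alpha_i)^{e_i}\right) = \bigoplus_i \ker(M-\alpha_i)^{e_i}$ and surjectivity of the product from surjectivity of each factor (composite of surjections, or the CRT splitting). Feeding in the single-factor analysis gives $\dim\ker(\tilde p(M)) = \sum_{i\neq 0}e_i$ and that $\bigcup_{i\neq 0}\{\Psi_{\alpha_i,1},\ldots,\Psi_{\alpha_i,e_i}\}$ is a basis — the union being a basis because each block contributes an independent set inside its summand and the sum is direct. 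Combined with the reduction step, this proves (1), (2) and (3) for the original $p$. The main obstacle is the reverse inclusion in the single-Jordan-block step, i.e.\ showing the kernel of $(M-\alpha)^e$ is not strictly larger than the span of the $e$ chosen vectors; this is where the structure of $\mathcal L$ (as opposed to an abstract module with a locally nilpotent operator) really matters, and the cleanest route is the inductive argument above, which only uses that $\ker(M-\alpha)$ is one-dimensional — the genuinely computational input, established by the explicit formula \eqref{basker}.
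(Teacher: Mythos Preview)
Your proof is correct and follows essentially the same underlying strategy as the paper, but the organization differs. The paper treats the dimension formula $\dim\Ker p(M)=\sum_{i\neq 0}e_i$ as already established in the discussion preceding the proposition (from surjectivity of each $M-\alpha$ and one-dimensionality of its kernel, without spelling out the induction or the coprime-factor decomposition), and then in the proof proper concentrates on showing that the candidate vectors lie in the kernel and are linearly independent, the latter via a direct annihilation argument: given a dependence relation, choose a nonzero coefficient $\lambda_{i_0,k_0}$ with $k_0$ minimal for that $i_0$, apply $\bigl[\prod_{i\neq i_0}(M-\alpha_i)^{e_i}\bigr](M-\alpha_{i_0})^{k_0-1}$, and use \eqref{psigrejs} and \eqref{konv} to force $\lambda_{i_0,k_0}=0$. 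You instead do the careful work on a single Jordan block (including the reverse inclusion $\ker(M-\alpha)^e\subseteq\mathrm{span}\{\Psi_{\alpha,1},\ldots,\Psi_{\alpha,e}\}$, which the paper leaves implicit), and then invoke the Chinese Remainder Theorem in $K[M]$ to obtain both the direct-sum decomposition of the kernel and the global linear independence for free. Your route is slightly more systematic and makes explicit exactly where the structure of $\mathcal L$ enters; the paper's route is shorter on the page because the dimension count is taken as read and the linear-independence argument handles all blocks simultaneously. Both rest on the same computational input, namely that $M-\alpha$ is surjective with one-dimensional kernel for $\alpha\in K^*$.
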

\begin{proof}
The first statement is clear, and the formula for $\dim \Ker(p(M))$ was already
noted above. From the first part of (2) we see that the $\Psi_{\alpha_i, k}$
are in the kernel of $p(M)$ for $1 \leq k \leq e_i$. The linear independence
follows by the standard argument: if $\sum_{i, k} \lambda_{i,k} \Psi_{\alpha_i,
k} =0$, suppose that not all coefficients are zero. Then choose indices $i_0$
and $k_0$ such that $\lambda_{i_0, k_0} \neq 0$, but $\lambda_{i_0, k} = 0$ for
all $k < k_0$. Applying
\[[\prod_{\substack {i \neq 0 \\ i \neq i_0}} (M - \alpha_i)^{e_i}] (M-\alpha_{i_0})^{k_0 -1}\]
one sees, when looking at the second part of \eqref{psigrejs} and \eqref{konv},
that
\[\lambda_{i_0, k_0} \prod_{\substack{i \neq 0 \\ i \neq i_0}} (\alpha_{i_0} - \alpha_i)^{e_i} \Psi_{\alpha_{i_0}, 1} =0.\]
Hence $\lambda_{i_0, k_0} =0$ after all and we have a contradiction.
\end{proof}
The argument in the above proof shows in fact the following.
\begin{pro}\label{almlinin}
If $\alpha_1, \ldots, \alpha_s$ are $s$ different elements in $K^*$, and $e_1, \ldots, e_s \geq 1$, then the elements $\Psi_{\alpha_1, 1}, \ldots,
\Psi_{\alpha_1, e_s}, \ldots, \Psi_{\alpha_s, 1}, \ldots, \Psi_{\alpha_s, e_s}$
are linearly independent over $K$.
\end{pro}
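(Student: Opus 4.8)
The plan is to prove Proposition~\ref{almlinin} by essentially transcribing the linear-independence argument already used inside the proof of Proposition~\ref{baspsi}, which does not in fact use that the $\Psi$'s lie in the kernel of a single polynomial $p(M)$. First I would set up the hypothetical relation $\sum_{i=1}^{s}\sum_{k=1}^{e_i}\lambda_{i,k}\Psi_{\alpha_i,k}=0$ with not all $\lambda_{i,k}$ zero, and argue by contradiction. Choosing $i_0$ with at least one nonzero coefficient among the $\lambda_{i_0,\cdot}$, pick the minimal $k_0$ with $\lambda_{i_0,k_0}\neq 0$, so that $\lambda_{i_0,k}=0$ for all $k<k_0$.

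Next I would apply to the relation the operator
\[\Bigl[\prod_{\substack{1\le i\le s\\ i\neq i_0}}(M-\alpha_i)^{e_i}\Bigr](M-\alpha_{i_0})^{k_0-1}.\]
Using the first line of \eqref{psigrejs}, every term with index $i\neq i_0$ is killed because $(M-\alpha_i)^{e_i}\Psi_{\alpha_i,k}=0$ for $k\le e_i$. For the $i=i_0$ terms, the factor $(M-\alpha_{i_0})^{k_0-1}$ kills all $\Psi_{\alpha_{i_0},k}$ with $k<k_0$ (none survive by minimality anyway) and, by the second line of \eqref{psigrejs}, sends $\Psi_{\alpha_{i_0},k_0}$ to $\Psi_{\alpha_{i_0},1}$; while the remaining factors $(M-\alpha_i)^{e_i}$ act on $\Psi_{\alpha_{i_0},1}$ as multiplication by $(\alpha_{i_0}-\alpha_i)^{e_i}$ plus lower $\Psi_{\alpha_{i_0},r}$ terms by \eqref{konv} — but since $\Psi_{\alpha_{i_0},1}$ is the bottom of the Jordan block there are no lower terms, so it is simply multiplication by $\prod_{i\neq i_0}(\alpha_{i_0}-\alpha_i)^{e_i}$. (The terms $\Psi_{\alpha_{i_0},k}$ with $k>k_0$ also contribute, but after applying $(M-\alpha_{i_0})^{k_0-1}$ they land in the span of $\Psi_{\alpha_{i_0},k-k_0+1}$, and then the further factors $(M-\alpha_i)^{e_i}$ with $i\neq i_0$ are invertible on $\Ker\big(\prod_{i\neq i_0}$ nothing$\big)$ — more carefully, one applies instead $(M-\alpha_{i_0})^{k_0-1}\prod_{i\neq i_0}(M-\alpha_i)^{e_i}$ and notes the image of $\Psi_{\alpha_{i_0},k}$ for $k>k_0$ still lies in a span of $\Psi_{\alpha_{i_0},r}$ with $r>1$; to isolate $\Psi_{\alpha_{i_0},1}$ cleanly one should apply the single operator that reduces everything to multiples of $\Psi_{\alpha_{i_0},1}$, namely first hit with $(M-\alpha_{i_0})^{k_0-1}$ and observe that all higher terms map into $\mathrm{span}\{\Psi_{\alpha_{i_0},r}:r\ge 1\}$, then the standard Jordan-block independence of $\Psi_{\alpha_{i_0},1},\dots$ finishes it.) In any case, exactly as in the proof of Proposition~\ref{baspsi}, one arrives at
\[\lambda_{i_0,k_0}\prod_{\substack{1\le i\le s\\ i\neq i_0}}(\alpha_{i_0}-\alpha_i)^{e_i}\,\Psi_{\alpha_{i_0},1}=0,\]
and since the $\alpha_i$ are distinct and nonzero the product of the $(\alpha_{i_0}-\alpha_i)^{e_i}$ is a nonzero scalar, while $\Psi_{\alpha_{i_0},1}\neq 0$, forcing $\lambda_{i_0,k_0}=0$, a contradiction.

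The only genuinely delicate point — and the one I would be most careful about — is bookkeeping the contributions of the terms $\Psi_{\alpha_{i_0},k}$ with $k>k_0$: they are not automatically annihilated by the chosen operator, so the argument as literally written in the proof of Proposition~\ref{baspsi} is really an induction, peeling off one Jordan block at a time, or equivalently it reduces to the classical fact that within a fixed infinite Jordan block the vectors $\Psi_{\alpha,1},\dots,\Psi_{\alpha,e}$ are linearly independent (immediate from \eqref{psigrejs}), combined with the separation of distinct eigenvalues $\alpha_i$ achieved by the invertibility of $(M-\alpha_i)$ on the $\alpha_{i_0}$-generalized eigenspace. I would phrase the final write-up as: the proof is identical to the linear-independence part of the proof of Proposition~\ref{baspsi} after dropping the constraint $k\le e_i$ (which was only used there to place the $\Psi$'s in $\Ker p(M)$ and plays no role in the independence argument), so no new work is needed beyond observing this.
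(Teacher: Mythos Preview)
Your approach is exactly the paper's: its entire proof is the single sentence ``The argument in the above proof shows in fact the following,'' pointing back to the linear-independence portion of the proof of Proposition~\ref{baspsi}, and you have correctly recognised that nothing in that argument uses membership in $\Ker p(M)$. You are also right to flag the bookkeeping for the terms $\Psi_{\alpha_{i_0},k}$ with $k>k_0$, which the paper glosses over; the cleanest way to make the displayed conclusion literally true is to take $k_0$ \emph{maximal} (rather than minimal) among indices with $\lambda_{i_0,k}\neq 0$, so that those higher terms simply do not appear.
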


\section{Partial order}\label{sec:parord}
Throughout this section, which is a preparation for
Section~\ref{sec:simultaneous}, we assume that $K$ is algebraically closed. We
will analyze the action of an arbitrary $P \in H_K (q)$ on the
$\Psi_{\alpha,s}$ from the previous section and see that the results are related to a partial order on the
indices $(\alpha, s)$ which we now introduce. Let us take $\mathbb{N} = \{1,
2, \ldots\}$ as convention.
\begin{defn}\label{partord}
On $K^* \times \mathbb{N}$, define
\begin{enumerate}
\item if $q = 1$: $(\alpha, r) \leq (\beta, s)$ if and only if $\alpha = \beta$ and $r \leq s$.
\item if $q \neq 1$: $(\alpha, r) \leq (\beta, s)$ if and only if $\beta = \frac{\alpha}{q^j}$ for some $j > 0$, or if $\alpha = \beta$ and $r \leq s$.
\end{enumerate}
\end{defn}
It is easily checked that this is a partial ordering under our assumption on the $\{n\}_q$. Note that being comparable for this partial order is an equivalence relation on $K^* \times \mathbb{N}$.
There is a natural $\mathbb{Z}$-action on $K^*$ by multiplication with powers of $q$ (which is a trivial action if $q = 1$), and two elements $(\alpha, r)$ and $(\beta, s)$ are comparable precisely when $\alpha$ and $\beta$ are in the same $\mathbb{Z}$-orbit.
An equivalence class of mutually comparable pairs is of the form \[\bigcup_{\substack{j \in \mathbb{Z} \\ s \in \mathbb{N}}} (q^j \alpha, s)\] for some $\alpha \in K^*$, which is uniquely determined only if $q = 1$. The verification of the following lemma is routine.
\begin{lem}\label{parordfacts}
For the partial ordering on $K^* \times \mathbb{N}$ defined above the following hold.
\begin{enumerate}
\item Suppose $q =1$. Then for all $m \geq 0$ and all $(\alpha, r), (\beta, s) \in K^* \times \mathbb{N}$, one has $(\alpha, r+m) \leq (\beta, s+m)$ if and only if $(\alpha, r) \leq (\beta, s)$.
\item Suppose $q \neq 1$. Then for all $m \in \mathbb{Z}$ and all $(\alpha, r), (\beta, s) \in K^* \times \mathbb{N}$, one has $(\frac{\alpha}{q^m}, r) \leq (\frac{\beta}{q^m}, s)$ if and only if $(\alpha, r) \leq (\beta, s)$.
\end{enumerate}
\end{lem}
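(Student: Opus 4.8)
The plan is to verify Lemma~\ref{parordfacts} directly from Definition~\ref{partord}, treating the two cases $q=1$ and $q\neq 1$ separately, since in each case the partial order has an explicit combinatorial description that is manifestly translation-invariant in the relevant parameter.

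First I would dispose of the case $q=1$. Here $(\alpha,r)\leq(\beta,s)$ holds if and only if $\alpha=\beta$ and $r\leq s$. The condition $\alpha=\beta$ does not involve $r$ or $s$ at all, so it is unaffected by adding $m$ to both second coordinates; and $r+m\leq s+m$ is equivalent to $r\leq s$ for any $m\geq 0$ (indeed for any integer $m$, though we only need $m\geq 0$ here to stay inside $\mathbb{N}$). Combining these two observations gives $(\alpha,r+m)\leq(\beta,s+m)\iff\alpha=\beta\text{ and }r+m\leq s+m\iff\alpha=\beta\text{ and }r\leq s\iff(\alpha,r)\leq(\beta,s)$, which is exactly part (1).

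For $q\neq 1$, recall that $(\alpha,r)\leq(\beta,s)$ means either $\beta=\alpha/q^j$ for some $j>0$, or else $\alpha=\beta$ and $r\leq s$. Now I would apply the substitution $\alpha\mapsto\alpha/q^m$, $\beta\mapsto\beta/q^m$ and check that each of the two defining alternatives is preserved. For the first alternative: $\beta/q^m=(\alpha/q^m)/q^j$ is equivalent, after cancelling the nonzero factor $q^{-m}$, to $\beta=\alpha/q^j$, and the condition $j>0$ is untouched; here one uses Assumption~\ref{assumptionq} (equivalently Remark~\ref{nonunroot}) only to the extent that $q\neq0$, so that $q^m$ is invertible and cancellation is legitimate. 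For the second alternative: $\alpha/q^m=\beta/q^m$ is equivalent to $\alpha=\beta$ (again cancelling $q^{-m}$), and $r\leq s$ is literally unchanged. Since the disjunction of the two alternatives is preserved under the substitution, part (2) follows. One should also note in passing that the two alternatives cannot be conflated in a way that breaks the argument — if $\alpha=\beta$ then $\beta=\alpha/q^j$ with $j>0$ would force $q^j=1$, impossible by our assumption on the $\{n\}_q$ — but this disjointness is not actually needed for the equivalences, only the termwise preservation is.

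There is no real obstacle here; the lemma is genuinely routine, which is why the paper says so. The only points requiring the slightest care are (a) remembering that in the $q\neq1$ case we are substituting into \emph{both} coordinates simultaneously and that $q\neq0$ is what licenses cancellation of $q^{-m}$, and (b) in the $q=1$ case noting that the restriction $m\geq0$ is imposed merely to keep $r+m,s+m$ in $\mathbb{N}$ and plays no role in the logic. A clean write-up would simply state the defining conditions, apply the coordinate shift, and observe term by term that nothing changes.
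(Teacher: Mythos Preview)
Your proposal is correct and is precisely the routine verification from Definition~\ref{partord} that the paper leaves to the reader; there is no alternative approach to compare with, since the paper gives no proof beyond the remark that the verification is routine.
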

Note that \eqref{konv} can be written as
\begin{equation}p(M) \Psi_{\alpha, s} = p(\alpha) \Psi_{\alpha, s} + \sum_{(\beta,r) \in [(\alpha, 1), (\alpha, s))} c \Psi_{\beta,r}, \label{intval}
\end{equation}
where $[(\alpha, 1), (\alpha,s))$ is a left-closed and right-open order
interval. The common thread in this section is to establish that various
elements of $\mathcal L$ can similarly be regarded as a sum of a leading term
corresponding to a certain index $(\alpha,s)$, and a remainder which is a sum of terms
corresponds to various indices lying strictly below $(\alpha,s)$ in the partial
order on the index set. This observation will be crucial in the proof of
Theorem~\ref{simeigfin}. For reasons of notational simplicity we will not formulate the results as in \eqref{intval}, and it would in fact have been possible to introduce the partial order only in Section~\ref{sec:simultaneous} on the occasion of the proof of Theorem~\ref{simeigfin}, but the reader may find it helpful to view the results in the present section in the light of this partial ordering already.

We turn to the action of $D_q$ on the $\Psi_{\alpha,s}$. We start with the case $q \neq 1$, which is the most complicated. A routine computation gives
\begin{equation}
D_q \Psi_{\alpha, 1} = \frac{q}{\alpha (q-1)} \Psi_{\frac{\alpha}{q}, 1} - \frac{1}{\alpha (q-1)} \Psi_{\alpha, 1}.\label{dqpaker}
\end{equation}
\begin{pro}\label{indqneq1}
If $q \neq 1$, then
\[D_q \Psi_{\alpha, s} = \frac{q^{2-s}}{\alpha (q-1)} \Psi_{\frac{\alpha}{q},s} + \sum_{r <s} c \Psi_{\frac{\alpha}{q}, r} + \sum_{r \leq s} c \Psi_{\alpha,r}.\]
\end{pro}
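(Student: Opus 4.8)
The plan is to prove the formula by induction on $s$, the base case $s=1$ being exactly the displayed identity~\eqref{dqpaker} (with the understanding that the two sums over $r<s$ and $r\leq s$ collapse, respectively, to nothing and to the single term $-\frac{1}{\alpha(q-1)}\Psi_{\alpha,1}$, which matches since $q^{2-s}=q$). For the inductive step, suppose the formula holds for some $s\geq 1$; I want to derive it for $s+1$. The natural handle is the defining relation $(M-\alpha)\Psi_{\alpha,s+1}=\Psi_{\alpha,s}$, which lets me write $\Psi_{\alpha,s}$ in terms of $\Psi_{\alpha,s+1}$ under the action of $M$. Applying $D_q$ and using the commutation relation in $H_K(q)$ — namely $D_q M = qM D_q + I$, equivalently $D_q(M-\alpha)=q(M-\tfrac{\alpha}{q})D_q + I$ — I can move $D_q$ past $M-\alpha$:
\begin{equation}
D_q\Psi_{\alpha,s} = D_q(M-\alpha)\Psi_{\alpha,s+1} = q\Bigl(M-\tfrac{\alpha}{q}\Bigr)D_q\Psi_{\alpha,s+1} + \Psi_{\alpha,s+1}. \label{eq:plan_move}
\end{equation}
Thus $q(M-\tfrac{\alpha}{q})D_q\Psi_{\alpha,s+1} = D_q\Psi_{\alpha,s} - \Psi_{\alpha,s+1}$, and the right-hand side is known by the inductive hypothesis (the term $D_q\Psi_{\alpha,s}$) together with the obviously $(\alpha,s+1)$-indexed term $-\Psi_{\alpha,s+1}$.

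From here the task is to solve~\eqref{eq:plan_move} for $D_q\Psi_{\alpha,s+1}$, i.e.\ to invert the operator $q(M-\tfrac{\alpha}{q})$ on the relevant subspace. The point is that the right-hand side of~\eqref{eq:plan_move} lies in the span of the $\Psi_{\alpha/q,r}$ $(r\leq s)$ and $\Psi_{\alpha,r}$ $(r\leq s+1)$; I would like to conclude that $D_q\Psi_{\alpha,s+1}$ lies in the span of the $\Psi_{\alpha/q,r}$ $(r\leq s+1)$ and $\Psi_{\alpha,r}$ $(r\leq s+1)$, with the stated leading coefficient $\frac{q^{1-s}}{\alpha(q-1)}=\frac{q^{2-(s+1)}}{\alpha(q-1)}$ for $\Psi_{\alpha/q,s+1}$. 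To make this inversion rigorous I need to know how $M-\tfrac{\alpha}{q}$ acts on the two families: on the $\Psi_{\alpha/q,r}$ it is the "shift down" operator, $(M-\tfrac{\alpha}{q})\Psi_{\alpha/q,r}=\Psi_{\alpha/q,r-1}$ (and $0$ for $r=1$), so it is \emph{nilpotent} direction — not invertible there; but on the $\Psi_{\alpha,r}$ it acts as $\alpha-\tfrac{\alpha}{q}=\tfrac{\alpha(q-1)}{q}\neq 0$ plus lower-order terms in the same family, hence invertibly on that family. So the inversion has to be done with care: I would posit $D_q\Psi_{\alpha,s+1}=\sum_{r\leq s+1}x_r\Psi_{\alpha/q,r}+\sum_{r\leq s+1}y_r\Psi_{\alpha,r}$ with unknown scalars, apply $q(M-\tfrac{\alpha}{q})$, and match coefficients against the known right-hand side of~\eqref{eq:plan_move}; matching the $\Psi_{\alpha/q,r}$-coefficients from the top down determines $x_{s+1},x_s,\dots$ (this is where the leading coefficient $\frac{q^{1-s}}{\alpha(q-1)}$ emerges, from $q\cdot x_{s+1}$ producing the known coefficient $\frac{q^{2-s}}{\alpha(q-1)}$ of $\Psi_{\alpha/q,s}$ after the shift, using the inductive hypothesis), and matching the $\Psi_{\alpha,r}$-coefficients determines the $y_r$ uniquely because the relevant triangular system has nonzero diagonal $\tfrac{\alpha(q-1)}{q}$.

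The main obstacle — and the only place requiring genuine attention rather than bookkeeping — is precisely this mixed inversion step: one must verify that the ansatz for $D_q\Psi_{\alpha,s+1}$ involves no $\Psi_{\alpha/q,r}$ or $\Psi_{\alpha,r}$ with $r>s+1$ (so that the claimed index bound is not violated), and that the two linear subsystems for the $x$'s and $y$'s are each solvable with the claimed leading behaviour. The index bound is where the partial order of Section~\ref{sec:parord} is doing its work in the background: everything produced stays in the order ideal below $(\alpha/q,s+1)$ together with the chain up to $(\alpha,s+1)$, and Lemma~\ref{parordfacts}(2) guarantees this structure is stable under the shift by $q$. A clean alternative that avoids solving a linear system: first establish the index bound qualitatively — show by induction, using~\eqref{eq:plan_move} and the fact that $(M-\tfrac{\alpha}{q})$ maps $\mathrm{span}\{\Psi_{\alpha/q,r}:r\leq N\}\oplus\mathrm{span}\{\Psi_{\alpha,r}:r\leq N\}$ onto itself modulo an issue only at the bottom that is absorbed by the added $\Psi_{\alpha,s+1}$ — that $D_q\Psi_{\alpha,s+1}$ has the asserted form with \emph{some} leading coefficient $c_{s+1}$ on $\Psi_{\alpha/q,s+1}$; then pin down $c_{s+1}$ by comparing the coefficient of the top term $\Psi_{\alpha/q,s}$ on both sides of~\eqref{eq:plan_move} after applying $q(M-\tfrac{\alpha}{q})$, which gives $q\,c_{s+1}=\frac{q^{2-s}}{\alpha(q-1)}$, hence $c_{s+1}=\frac{q^{1-s}}{\alpha(q-1)}$, as claimed. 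This two-stage argument keeps the computation to a minimum and isolates the one coefficient that actually matters.
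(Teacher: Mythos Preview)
Your inductive setup via the commutation identity $D_q(M-\alpha)=q(M-\tfrac{\alpha}{q})D_q+I$ is correct and yields the key relation $q(M-\tfrac{\alpha}{q})\,D_q\Psi_{\alpha,s+1}=D_q\Psi_{\alpha,s}-\Psi_{\alpha,s+1}$, with the right-hand side known by induction. The gap is in the inversion step. You \emph{posit} that $D_q\Psi_{\alpha,s+1}$ lies in $\mathrm{span}\{\Psi_{\alpha/q,r},\Psi_{\alpha,r}:r\le s+1\}$ and then match coefficients, but nothing you have written forces $D_q\Psi_{\alpha,s+1}$ to lie in this (or any) finite span of $\Psi$'s: knowing that $(M-\tfrac{\alpha}{q})v$ lies in a given subspace of $\mathcal L$ does not locate $v$ there, since $M-\tfrac{\alpha}{q}$ is surjective on $\mathcal L$. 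Your ``clean alternative'' does not supply this either---the phrase ``an issue only at the bottom that is absorbed by the added $\Psi_{\alpha,s+1}$'' is not an argument, and in fact your coefficient system for the $x_r$ leaves $x_1$ undetermined (you never mention this), which is precisely the shadow of the missing step. The repair is short: exhibit an explicit preimage $v$ of $\tfrac{1}{q}(D_q\Psi_{\alpha,s}-\Psi_{\alpha,s+1})$ under $M-\tfrac{\alpha}{q}$ inside the desired span (possible because the shift $\Psi_{\alpha/q,r}\mapsto\Psi_{\alpha/q,r-1}$ hits all $\Psi_{\alpha/q,r}$ with $r\le s$, and on the $\Psi_{\alpha,r}$'s the operator is upper-triangular with nonzero diagonal $\alpha(q-1)/q$), and then use $\Ker(M-\tfrac{\alpha}{q})=K\Psi_{\alpha/q,1}$ to conclude $D_q\Psi_{\alpha,s+1}=v+c\Psi_{\alpha/q,1}$ for some scalar $c$. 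After that your leading-coefficient computation goes through verbatim.

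The paper sidesteps the inversion problem altogether. Instead of going from $s$ to $s+1$ by solving for a preimage, it shows directly (a short computation using the same commutation relation and the inductive hypothesis) that $(M-\alpha)^s(M-\tfrac{\alpha}{q})^s\,D_q\Psi_{\alpha,s}=0$. Proposition~\ref{baspsi} then immediately places $D_q\Psi_{\alpha,s}$ in $\mathrm{span}\{\Psi_{\alpha,r},\Psi_{\alpha/q,r}:r\le s\}$, and the leading coefficient is read off by applying $(M-\tfrac{\alpha}{q})^{s-1}$ and comparing via Proposition~\ref{almlinin}. The advantage of this annihilation route is that the structure theorem for kernels of polynomials in $M$ does the work of locating $D_q\Psi_{\alpha,s}$ in the right finite-dimensional space; nothing has to be inverted. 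Your route, once patched as above, is a legitimate alternative of comparable length---essentially a first-order recursion for the sequence $(D_q\Psi_{\alpha,s})_{s\ge 1}$---but as written it is missing the one-line kernel argument that makes it rigorous.
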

\begin{proof}
By induction. The case $s =1$ follows from \eqref{dqpaker} and for the
induction step we argue as follows, using the relation $q M D_q = D_q M -I$ in
the first equality and assuming that $s\geq 2$:
\begin{align*}
&(M- \alpha)^s (M - \frac{\alpha}{q})^s D_q \Psi_{\alpha, s} = \frac{1}{q} (M - \alpha)^s (M - \frac{\alpha}{q})^{s-1} (D_q M - I - \alpha D_q) \Psi_{\alpha,s}\\
&\stackrel{s \geq 2}{=} \frac{1}{q} (M - \alpha)^s (M - \frac{\alpha}{q})^{s-1} (D_q (\alpha \Psi_{\alpha, s} + \Psi_{\alpha, s-1}) - \Psi_{\alpha, s}- \alpha D_q \Psi_{\alpha, s})\\
&=\frac{1}{q} (M - \alpha)^s (M - \frac{\alpha}{q})^{s-1} (D_q \Psi_{\alpha, s-1} - \Psi_{\alpha, s}) \stackrel{\eqref{psigrejs}}{=} \frac{1}{q} (M - \alpha)^s (M - \frac{\alpha}{q})^{s-1} D_q \Psi_{\alpha, s-1}\\
&\stackrel{\mathrm{ind.}}{=} \frac{1}{q} (M-\alpha)^s (M - \frac{\alpha}{q})^{s-1} \left[\frac{q^{3-s}}{\alpha (q-1)} \Psi_{\frac{\alpha}{q}, s-1} + \sum_{r < s-1} c \Psi_{\frac{\alpha}{q},r} + \sum_{r \leq s-1} c \Psi_{\alpha, r}\right]\\
&\stackrel{\eqref{psigrejs}}{=} 0.
\end{align*}
From Proposition~\ref{baspsi} we conclude that
\[D_q \Psi_{\alpha, s} = c_0 \Psi_{\frac{\alpha}{q},s} + \sum_{r < s} c \Psi_{\frac{\alpha}{q}, r} + \sum_{r \leq s} c \Psi_{\alpha, r}.\]
If we apply $(M- \frac{\alpha}{q})^{s-1}$ to this equation, then using
\eqref{psigrejs} and \eqref{konv} we see that the right hand side gives \[c_0
\Psi_{\frac{\alpha}{q}, 1} + \sum_{r \leq s} c \Psi_{\alpha, r}.\] The left
hand side gives
\begin{align*}
&(M-\frac{\alpha}{q})^{s-1} D_q \Psi_{\alpha, s} = \frac{1}{q} (M - \frac{\alpha}{q})^{s-2} (D_q \Psi_{\alpha, s-1} - \Psi_{\alpha, s})\\
&\stackrel{\eqref{konv}}{=} \frac{1}{q}(M - \frac{\alpha}{q})^{s-2} D_q \Psi_{\alpha, s-1} + \sum_{r \leq s} c \Psi_{\alpha, r}\\
&\stackrel{\mathrm{ind.}}{=} \frac{1}{q} (M - \frac{\alpha}{q})^{s-2} \left[\frac{q^{3-s}}{\alpha (q-1)} \Psi_{\frac{\alpha}{q}, s-1} + \sum_{r < s-1} c \Psi_{\frac{\alpha}{q},r} + \sum_{r \leq s-1} c \Psi_{\alpha, r}\right] + \sum_{r \leq s} c \Psi_{\alpha, r}\\
&= \frac{q^{2-s}}{\alpha (q-1)} \Psi_{\frac{\alpha}{q}, 1} + \sum_{r \leq s} c \Psi_{\alpha, r}.
\end{align*}
By Proposition~\ref{almlinin}, comparing completes the induction step.
\end{proof}
Iterating this result, we conclude that, for $\alpha \in K^*, j \geq 0$, and
$s\geq 1$,
\[D_q^j \Psi_{\alpha, s} = \frac{q^{\frac{j ( j - 2s +3)}{2}}}{\alpha^j (q-1)^j} \Psi_{\frac{\alpha}{q^j}, s} + \sum_{r < s} c \Psi_{\frac{\alpha}{q^j},r} + \sum_{\substack{i<j \\ r \leq s}} c \Psi_{\frac{\alpha}{q^i},r},\]
and then
\begin{align*}
&p_j (M) D_q ^j \Psi_{\alpha, s} = \frac{q^{\frac{j ( j - 2s +3)}{2}}}{\alpha^j (q-1)^j} p_j(\frac{\alpha}{q^j}) \Psi_{\frac{\alpha}{q^j}, s} +  \sum_{r < s} c \Psi_{\frac{\alpha}{q^j},r} + \sum_{\substack{i<j \\ r \leq s}} c \Psi_{\frac{\alpha}{q^i},r},\\
&(\alpha \in K^*, j \geq 0, s \geq 1, p_j \in K[X]).
\end{align*}
The following result, which is the basic ingredient in the proof of
Theorem~\ref{simeigfin} if $q\neq 1$, is now clear.
\begin{pro}\label{actpsi}
If $q \neq 1$, and $P = \sum_{j=0}^m p_j (M) D_q^j \,\,(m \geq 0)$ with $p_m \neq 0$, then for all $\alpha \in K^*$ and $s \geq 1$, we have
\[P \Psi_{\alpha,s} = \frac{q^{\frac{m ( m - 2s +3)}{2}}}{\alpha^m (q-1)^m} p_m(\frac{\alpha}{q^m}) \Psi_{\frac{\alpha}{q^m}, s} +  \sum_{r < s} c \Psi_{\frac{\alpha}{q^m},r} + \sum_{\substack{i<m \\ r \leq s}} c \Psi_{\frac{\alpha}{q^i},r}.\]
\end{pro}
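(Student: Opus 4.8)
The statement is, as the paper indicates, essentially a corollary of the two displayed identities preceding it, so the plan is to make those precise by induction and then sum over $j$. Concretely, I would first prove by induction on $j\ge 0$ that for all $\alpha\in K^*$ and $s\ge 1$,
\[D_q^j\Psi_{\alpha,s}=\frac{q^{j(j-2s+3)/2}}{\alpha^j(q-1)^j}\,\Psi_{\alpha/q^j,s}+\sum_{r<s}c\,\Psi_{\alpha/q^j,r}+\sum_{\substack{i<j\\ r\le s}}c\,\Psi_{\alpha/q^i,r}.\]
The case $j=0$ is trivial and $j=1$ is exactly Proposition~\ref{indqneq1}. For the inductive step one applies $D_q$ to the right-hand side for $j$, using Proposition~\ref{indqneq1} on each $\Psi_{\alpha/q^j,r}$ and $\Psi_{\alpha/q^i,r}$ appearing: the leading term picks up the scalar $q^{2-s}/((\alpha/q^j)(q-1))$, and since the $q$-exponent then increases by $(2-s)+j=j+2-s$, the bookkeeping identity $\tfrac{j(j-2s+3)}{2}+(j+2-s)=\tfrac{(j+1)(j+1-2s+3)}{2}$ gives the stated exponent. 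Inspecting indices (and using Lemma~\ref{parordfacts}(2)), every remaining contribution lands in $\sum_{r<s}c\,\Psi_{\alpha/q^{j+1},r}+\sum_{i<j+1,\,r\le s}c\,\Psi_{\alpha/q^i,r}$; in particular no second term of the form $\Psi_{\alpha/q^{j+1},s}$ is produced.

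Next I would apply \eqref{konv} to this formula, with $\alpha/q^i$ in the role of $\alpha$: since $p_j(M)$ only adds terms $\Psi_{\beta,r}$ with $r$ strictly below the $s$-index already present, one gets
\[p_j(M)D_q^j\Psi_{\alpha,s}=\frac{q^{j(j-2s+3)/2}}{\alpha^j(q-1)^j}\,p_j(\alpha/q^j)\,\Psi_{\alpha/q^j,s}+\sum_{r<s}c\,\Psi_{\alpha/q^j,r}+\sum_{\substack{i<j\\ r\le s}}c\,\Psi_{\alpha/q^i,r}.\]
Finally I would sum these identities for $j=0,\dots,m$. The $j=m$ summand contributes precisely the explicit leading term $\tfrac{q^{m(m-2s+3)/2}}{\alpha^m(q-1)^m}p_m(\alpha/q^m)\,\Psi_{\alpha/q^m,s}$, the middle sum $\sum_{r<s}c\,\Psi_{\alpha/q^m,r}$, and part of the sum $\sum_{i<m,\,r\le s}c\,\Psi_{\alpha/q^i,r}$. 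For each $j<m$, the entire summand consists of terms $\Psi_{\alpha/q^i,r}$ with $i\le j<m$ and $r\le s$, hence is absorbed into $\sum_{i<m,\,r\le s}c\,\Psi_{\alpha/q^i,r}$; and no such summand can affect the coefficient of $\Psi_{\alpha/q^m,s}$ since $q^m\ne q^i$ for $i<m$. Adding everything up yields the asserted formula.

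The argument has no real obstacle, which matches the paper's remark that the result is ``now clear''. The only points demanding genuine care are the tracking of the $q$-exponent in the first induction, and the repeated use of the fact that the powers $q^i$ ($i\in\mathbb Z$) are pairwise distinct — equivalently Assumption~\ref{assumptionq}/Remark~\ref{nonunroot}(2) — together with the linear independence of the $\Psi_{\alpha/q^i,r}$ from Proposition~\ref{almlinin}, which is what makes ``the coefficient of $\Psi_{\alpha/q^m,s}$'' an unambiguous notion and guarantees that it receives a contribution only from the $j=m$ term. (Note that this leading coefficient may still vanish, namely when $\alpha/q^m$ is a root of $p_m$; the proposition makes no nonvanishing claim, and this subtlety is handled later in the proof of Theorem~\ref{simeigfin}.)
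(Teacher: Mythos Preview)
Your proposal is correct and follows exactly the paper's own route: the paper simply states that iterating Proposition~\ref{indqneq1} yields the formula for $D_q^j\Psi_{\alpha,s}$, then applies \eqref{konv} to obtain the $p_j(M)D_q^j$ formula, and declares Proposition~\ref{actpsi} ``now clear'' from summing over $j$. You have merely supplied the bookkeeping (the exponent identity and the check that only $j=m$ contributes to the leading term) that the paper leaves implicit.
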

We now take care of the case $q=1$, where it is easier to derive the analogue of Proposition~\ref{actpsi}.
\begin{pro}\label{actpsiq1}
For $\alpha \in K^*$ and $s=1, 2, \ldots$ we have
\[D_1 \Psi_{\alpha,s} = -s \Psi_{\alpha, s+1} + \sum_{r < s+1} c \Psi_{\alpha, r}.\]
\end{pro}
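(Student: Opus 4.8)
The plan is to compute $D_1\Psi_{\alpha,s}$ by the same inductive strategy used in the proof of Proposition~\ref{indqneq1}, but exploiting the fact that $q=1$ makes $D_1$ translation invariant, which is precisely what forces all the output terms to stay attached to the same $\alpha$. The base case $s=1$ is a direct computation: $\Psi_{\alpha,1}=\sum_n (t/\alpha)^n$, so $D_1\Psi_{\alpha,1}=\sum_n \{n\}_1 \alpha^{-n} t^{n-1}=\sum_n n\,\alpha^{-n}t^{n-1}$, and one checks this equals $-\Psi_{\alpha,2}+c\,\Psi_{\alpha,1}$ for a suitable scalar; the cleanest way to verify this is to apply $(M-\alpha)$ to the claimed right-hand side and use $(M-\alpha)\Psi_{\alpha,s}=\Psi_{\alpha,s-1}$ together with $(M-\alpha)\Psi_{\alpha,1}=0$, reducing it to the already-known identity $D_1\Psi_{\alpha,1}=c\,\Psi_{\alpha,1}$ modulo the kernel of $(M-\alpha)$, i.e.\ to checking a single coefficient.

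For the induction step, assume $s\geq 2$ (or $s\geq 1$, stepping from $s$ to $s+1$) and that the formula holds for all smaller values. The key algebraic input is the commutation relation for $q=1$, namely $M D_1 = D_1 M - I$, equivalently $D_1 M - M D_1 = I$. The plan is to apply a suitable power of $(M-\alpha)$ to the conjectured expression and show both sides agree. Concretely: since $\Psi_{\alpha,s}=(M-\alpha)\Psi_{\alpha,s+1}$ is not directly available, instead write $D_1\Psi_{\alpha,s}$ using $(M-\alpha)\Psi_{\alpha,s}=\Psi_{\alpha,s-1}$, so $D_1\Psi_{\alpha,s}=D_1(M-\alpha)\Psi_{\alpha,s-1}=(M-\alpha)D_1\Psi_{\alpha,s-1}+\Psi_{\alpha,s-1}$ by the commutation relation. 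By the induction hypothesis $D_1\Psi_{\alpha,s-1}=-(s-1)\Psi_{\alpha,s}+\sum_{r<s}c\,\Psi_{\alpha,r}$, and applying $(M-\alpha)$ shifts every index down by one: $(M-\alpha)\Psi_{\alpha,s}=\Psi_{\alpha,s-1}$, $(M-\alpha)\Psi_{\alpha,r}=\Psi_{\alpha,r-1}$ for $r\geq 2$, and $(M-\alpha)\Psi_{\alpha,1}=0$. This yields $D_1\Psi_{\alpha,s}=-(s-1)\Psi_{\alpha,s-1}+\sum_{r<s-1}c\,\Psi_{\alpha,r}+\Psi_{\alpha,s-1}$, which has the wrong shape — it lacks the $\Psi_{\alpha,s+1}$ term — so this only pins down $D_1\Psi_{\alpha,s}$ modulo $\Ker(M-\alpha)^{?}$. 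The correct approach is therefore to apply $(M-\alpha)^{s}$ to the \emph{claimed} identity $D_1\Psi_{\alpha,s}=-s\Psi_{\alpha,s+1}+\sum_{r<s+1}c\Psi_{\alpha,r}$ and verify the resulting identity by the same telescoping: $(M-\alpha)^{s}$ kills the remainder $\sum_{r<s+1}c\Psi_{\alpha,r}$ and sends $-s\Psi_{\alpha,s+1}$ to $-s\Psi_{\alpha,1}$, while the left side becomes $(M-\alpha)^{s}D_1\Psi_{\alpha,s}$, which one rewrites using $(M-\alpha)D_1=D_1(M-\alpha)+I$ iteratively down to $D_1\Psi_{\alpha,1}$ plus lower-index corrections, arriving at $-s\Psi_{\alpha,1}$ after collecting the coefficient. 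Finally, by Proposition~\ref{baspsi}(3) (or Proposition~\ref{almlinin}), $D_1\Psi_{\alpha,s}$ lies in the span of $\{\Psi_{\beta,r}\}$ with $\beta$ in the $q$-orbit of $\alpha$ — which for $q=1$ is just $\alpha$ itself — and the only such element not killed by $(M-\alpha)^{s}$ is $\Psi_{\alpha,s+1}$; knowing its coefficient and knowing the whole expression lies in $\bigoplus_{r\leq s+1}K\Psi_{\alpha,r}$ forces the stated form.

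The main obstacle is bookkeeping rather than conceptual: one has to be careful that $D_1\Psi_{\alpha,s}$ a priori lies in $\bigoplus_{r}K\Psi_{\alpha,r}$ with only finitely many $r$, which is not immediate from $D_1$ being a weighted shift on $\mathcal L$ — it follows because $D_1$ lowers the order filtration by one, so $D_1$ maps $\Ker(M-\alpha)^{s}$ into $\Ker(M-\alpha)^{s+1}$ (using $(M-\alpha)^{s+1}D_1=D_1(M-\alpha)^{s+1}+(s+1)(M-\alpha)^{s}$ and that $(M-\alpha)^{s}$ already kills $\Psi_{\alpha,s}$ after one more application is not quite enough — one needs $(M-\alpha)^{s+1}D_1\Psi_{\alpha,s}=D_1(M-\alpha)^{s+1}\Psi_{\alpha,s}+(s+1)(M-\alpha)^{s}\Psi_{\alpha,s}=0+ (s+1)\Psi_{\alpha,0}=0$). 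Once this finiteness is secured, the rest is the routine telescoping computation sketched above, entirely parallel to but strictly simpler than the proof of Proposition~\ref{indqneq1}.
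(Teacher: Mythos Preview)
Your proposal is correct and follows essentially the same route as the paper: establish $(M-\alpha)^{s+1}D_1\Psi_{\alpha,s}=0$ via the commutation relation, invoke Proposition~\ref{baspsi} to write $D_1\Psi_{\alpha,s}$ in the span of $\Psi_{\alpha,1},\ldots,\Psi_{\alpha,s+1}$, then apply $(M-\alpha)^s$ to extract the leading coefficient $-s$. Two minor slips: your commutator identity should read $(M-\alpha)^{k}D_1 = D_1(M-\alpha)^{k} - k(M-\alpha)^{k-1}$ (minus, not plus), and the detour through ``$D_1\Psi_{\alpha,s}=D_1(M-\alpha)\Psi_{\alpha,s-1}$'' is simply erroneous (that identity is false) --- but you correctly abandon it, and neither issue affects the final argument, which in fact needs no induction once the commutator identity is in hand.
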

\begin{proof}
For $s = 1$ we use $M D_1 = D_1 M - I$ to see that
\begin{align*}
(M-\alpha)^2 D_1 \Psi_{\alpha,1} &= (M - \alpha) (D_1 M - I - \alpha D_1)
\Psi_{\alpha,1}\\ &= (M-\alpha) (\alpha D_1 \Psi_{\alpha,1} - \Psi_{\alpha,1} -
\alpha D_1 \Psi_{\alpha,1})\\ &= 0.
\end{align*}
Hence, by Proposition~\ref{baspsi}, $D_1 \Psi_{\alpha, 1} = c_0 \Psi_{\alpha,
2} + c \Psi_{\alpha, 1}$ and applying $M-\alpha$ shows that $-\Psi_{\alpha, 1}
= c_0 \Psi_{\alpha,1}$, establishing the case $s=1$. The induction step is
similar to the one in the proof of Proposition~\ref{indqneq1}.
\end{proof}
Thus we have the following analogue of Proposition~\ref{actpsi}, which is the
basic ingredient in the proof of Theorem~\ref{simeigfin} if $q=1$.
\begin{pro}\label{actpsiq2}
If $q = 1$, and $P = \sum_{j=0}^m p_j (M) D_q^j \,\, (m \geq 0)$ with $p_m \neq 0$, then for all $\alpha \in K^*$ and $s \geq 1$ we have
\[P \Psi_{\alpha,s} = p_m (\alpha) (-1)^m s (s+1) \ldots (s+m-1) \Psi_{\alpha,s+m} + \sum_{r < s+m} c \Psi_{\alpha, r}.\]
\end{pro}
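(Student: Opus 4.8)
The plan is to obtain the statement by iterating Proposition~\ref{actpsiq1} $m$ times to compute $D_1^m\Psi_{\alpha,s}$, then multiplying on the left by each $p_j(M)$ and collecting terms according to the partial order. First I would establish, by induction on $j\geq 0$, the formula
\[
D_1^j\Psi_{\alpha,s} = (-1)^j s(s+1)\cdots(s+j-1)\,\Psi_{\alpha,s+j} + \sum_{r<s+j} c\,\Psi_{\alpha,r}.
\]
The base case $j=0$ is trivial and the case $j=1$ is exactly Proposition~\ref{actpsiq1}. For the induction step, apply $D_1$ to the inductive expression: the leading term produces $(-1)^j s\cdots(s+j-1)\,D_1\Psi_{\alpha,s+j}$, and by Proposition~\ref{actpsiq1} this equals $(-1)^{j+1} s\cdots(s+j-1)(s+j)\,\Psi_{\alpha,s+j+1}$ plus lower terms $\sum_{r<s+j+1}c\,\Psi_{\alpha,r}$; each remaining term $c\,\Psi_{\alpha,r}$ with $r<s+j$ produces, again by Proposition~\ref{actpsiq1}, a combination of $\Psi_{\alpha,r'}$ with $r'\leq r+1\leq s+j<s+j+1$, hence is absorbed into the error sum. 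This gives the displayed formula for $j+1$.

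Next I would multiply on the left by $p_j(M)$ and use \eqref{konv}: since $p_j(M)\Psi_{\alpha,k} = p_j(\alpha)\Psi_{\alpha,k} + \sum_{r<k}c\,\Psi_{\alpha,r}$, applying $p_j(M)$ to $D_1^j\Psi_{\alpha,s}$ yields
\[
p_j(M)D_1^j\Psi_{\alpha,s} = p_j(\alpha)(-1)^j s(s+1)\cdots(s+j-1)\,\Psi_{\alpha,s+j} + \sum_{r<s+j} c\,\Psi_{\alpha,r},
\]
because the correction from \eqref{konv} applied to the leading term lands below $(\alpha,s+j)$, and $p_j(M)$ applied to each lower term $c\,\Psi_{\alpha,r}$ with $r<s+j$ stays below $(\alpha,s+j)$. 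Finally, summing over $j=0,\ldots,m$, only the top term $j=m$ contributes at level $s+m$; every term with $j<m$ sits at level at most $s+j<s+m$ and is therefore absorbed into $\sum_{r<s+m}c\,\Psi_{\alpha,r}$. This gives
\[
P\Psi_{\alpha,s} = p_m(\alpha)(-1)^m s(s+1)\cdots(s+m-1)\,\Psi_{\alpha,s+m} + \sum_{r<s+m} c\,\Psi_{\alpha,r},
\]
as claimed, the coefficient being non-zero automatically when $\alpha$ is not a root of $p_m$ but in any case explicitly identified.

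I do not expect any genuine obstacle here; the only point requiring a little care is bookkeeping of the error terms, namely checking at each stage that applying $D_1$ or $p_j(M)$ to a term indexed by $(\alpha,r)$ never raises its level past the current leading level. This is exactly the content of Proposition~\ref{actpsiq1} and \eqref{konv}, both of which keep the same first coordinate $\alpha$ and raise the second coordinate by at most one per application of $D_1$ and not at all under multiplication by a polynomial in $M$. The combinatorial identity for the product $s(s+1)\cdots(s+m-1)$ is an immediate consequence of the telescoping in the induction and needs no separate argument.
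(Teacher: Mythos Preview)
Your proposal is correct and follows exactly the route the paper takes (implicitly): the paper states Proposition~\ref{actpsiq2} without proof, introducing it with ``Thus we have the following analogue of Proposition~\ref{actpsi}'', i.e., one is meant to iterate Proposition~\ref{actpsiq1}, then apply \eqref{konv}, then sum over $j$, precisely as in the derivation of Proposition~\ref{actpsi} from Proposition~\ref{indqneq1}. Your bookkeeping of the lower-order terms is accurate.
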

\section{Simultaneous eigenspaces}\label{sec:simultaneous}
After the preparations in the sections~\ref{sec:kerpol} and~\ref{sec:parord} we
can now establish the following result, which is vital for the proof of
Theorem~\ref{mainthm}. We assume $K$ is algebraically closed.
\begin{thm}\label{simeigfin}
Let $P = \sum_{j=0}^m p_j (M) D_q^j$ with $m \geq
1$ and $p_m \neq 0$. Suppose $d \geq 1$. Then
there exists a finite dimensional subspace
$\mathcal{L}_{P,d}$ of $\mathcal{L}$, which depends
on $p_m$ and $d$ only, such that $v \in
\mathcal{L}_{P,d}$ whenever $(P-\lambda)v = 0$ and
$(p(M) - \mu)v = 0$ for some $\lambda, \mu \in K$
and some non-constant $p \in K[X]$ of degree at most $d$.
\end{thm}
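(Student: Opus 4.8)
\textbf{Proof proposal for Theorem~\ref{simeigfin}.}

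The plan is to combine the explicit description of $\Ker(p(M))$ from Proposition~\ref{baspsi} with the ``leading term plus lower order'' action of $P$ on the $\Psi_{\alpha,s}$ from Propositions~\ref{actpsi} and~\ref{actpsiq2}, exploiting the partial order on $K^*\times\mathbb N$ of Section~\ref{sec:parord}. First I would observe that any $v$ as in the statement lies in $\Ker(p(M)-\mu)\supseteq\{0\}$ — more precisely, $v\in\Ker((p(M)-\mu))$, and since $p-\mu$ is again a non-constant polynomial of degree $\le d$, Proposition~\ref{baspsi} applies to it: $v$ is a $K$-linear combination of at most $d$ of the basis vectors $\Psi_{\alpha_i,k}$ ($1\le k\le e_i$, $\sum e_i\le d$) attached to the (nonzero) roots $\alpha_i$ of $p-\mu$. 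So $v$ lives in a span of $\Psi$'s whose second coordinates are bounded by $d$; what is \emph{not} yet controlled is which $\alpha$'s occur. The whole point is to show that the condition $(P-\lambda)v=0$ forces these $\alpha$'s, hence $v$ itself, into a finite-dimensional space depending only on $p_m$ and $d$.

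Next I would encode $v$ in terms of the partial order: write $v=\sum_{(\alpha,s)\in S} c_{\alpha,s}\Psi_{\alpha,s}$ for a finite index set $S\subseteq K^*\times\mathbb N$ with all second coordinates $\le d$, and consider the maximal elements of $S$ for the partial order. By Proposition~\ref{actpsi} (for $q\ne1$) or Proposition~\ref{actpsiq2} (for $q=1$), applying $P$ sends $\Psi_{\alpha,s}$ to a nonzero multiple of $\Psi_{\alpha/q^m,s}$ (resp. $\Psi_{\alpha,s+m}$) plus terms strictly below that new index; crucially, the leading coefficient is $\frac{q^{m(m-2s+3)/2}}{\alpha^m(q-1)^m}p_m(\alpha/q^m)$ (resp. $p_m(\alpha)(-1)^m s\cdots(s+m-1)$), which is nonzero \emph{unless} $p_m$ vanishes at the shifted point. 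Since the map $(\alpha,s)\mapsto(\alpha/q^m,s)$ (resp. $(\alpha,s)\mapsto(\alpha,s+m)$) is order-preserving and injective, in the expansion of $Pv$ the coefficient of the image of a $\le$-maximal index $(\alpha,s)\in S$ receives no cancellation from other terms. From $(P-\lambda)v=0$, i.e. $Pv=\lambda v$, comparing coefficients at such a maximal image index — which, one checks using Lemma~\ref{parordfacts} and the fact that $m\ge1$, lies strictly above everything in $S$, hence has zero coefficient in $\lambda v$ — forces $\frac{q^{\cdots}}{\alpha^m(q-1)^m}p_m(\alpha/q^m)c_{\alpha,s}=0$ (resp. the $q=1$ analogue). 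Since $c_{\alpha,s}\ne0$ for $(\alpha,s)\in S$, we conclude $p_m(\alpha/q^m)=0$ (resp. $p_m(\alpha)=0$) for every $\le$-maximal $(\alpha,s)\in S$.

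Finally I would propagate this constraint down the order. The maximal $\alpha$'s are pinned to the finite set $q^m\cdot\{\text{roots of }p_m\}$ (resp. $\{\text{roots of }p_m\}$). Every other index of $S$ is $\le$ some maximal one, and by Definition~\ref{partord} being $\le$ a fixed $(\alpha,s)$ restricts the first coordinate to the orbit $\{q^j\alpha:j\in\mathbb Z\}$ — still infinite — so one more step is needed: I would peel off the maximal layer (set $c_{\alpha,s}=0$ there, or rather argue inductively on the finitely many ``levels'' of $S$ below a maximal element) and re-apply the coefficient comparison to the next layer, each time concluding that the relevant $p_m$-value vanishes; but now the candidate $\alpha$'s at depth one below are constrained to be of the form $q^{m+1}\beta$ (resp. have bounded $s\le d$) with $p_m(\beta)=0$ \emph{and} sitting below a root-constrained maximal index, which with $s\le d$ cuts the orbit down to finitely many points. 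Iterating at most $d$ times (the second coordinates are bounded by $d$, bounding the number of layers), one gets that \emph{all} $\alpha$ occurring in $v$ lie in an explicit finite set $A(p_m,d)\subseteq K^*$ depending only on the roots of $p_m$, the number $q^m$ (resp. nothing), and $d$. Then $\mathcal L_{P,d}:=\operatorname{span}_K\{\Psi_{\alpha,s}:\alpha\in A(p_m,d),\,1\le s\le d\}$ is finite-dimensional, depends only on $p_m$ and $d$, and contains every such $v$. The main obstacle is precisely the bookkeeping in this last paragraph: making the downward induction on order-levels rigorous — in particular verifying that at each stage the relevant ``image'' index is strictly larger than all surviving indices so that coefficient comparison is clean, and that the accumulated constraints genuinely force finiteness of the $\alpha$-set rather than merely confining it to finitely many $q$-orbits. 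The cases $q=1$ and $q\ne1$ need slightly different counting (shift in $s$ versus shift in $\alpha$), but the structure is the same and I would present them in parallel using the partial order uniformly.
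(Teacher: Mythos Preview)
For $q=1$ your argument is essentially the paper's and is correct: the maximal indices of the support $S=\mathcal O_v$ force $p_m(\alpha)=0$, and since for $q=1$ comparability requires equal first coordinate, every $\alpha$ occurring in $v$ is already a root of $p_m$. No iteration is needed.

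For $q\ne 1$ there is a genuine gap, and it is more than bookkeeping. The ``peel off the maximal layer and iterate'' scheme breaks at the second step. Once you have removed the maximal indices and look at some $(\alpha_1,s_1)$ maximal in $S'=S\setminus\{\text{max.\ elts}\}$, the coefficient of $\Psi_{\alpha_1/q^m,\,s_1}$ in $(P-\lambda)v$ is \emph{not} clean: the original maximal indices are still present in $v$, and their lower-order contributions (the sums $\sum_{i<m,\;r\le s} c\,\Psi_{\alpha/q^i,r}$ in Proposition~\ref{actpsi}) can and typically do land precisely on $(\alpha_1/q^m,s_1)$. Hence you cannot conclude $p_m(\alpha_1/q^m)=0$ at the second layer, and the induction collapses. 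In fact it is simply false that every occurring $\alpha$ satisfies $p_m(\alpha/q^m)=0$; only those enjoying a stronger isolation property do. Your bound ``at most $d$ layers'' is correct as a bound on chain length in $S$, but chain length does not bound exponents: a two-element chain $(q^N\beta,1)<(\beta,1)$ has length $2$ for arbitrarily large $N$.

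The paper's argument for $q\ne 1$ does not iterate over layers. It isolates a condition (called NOI: $(\alpha_0,s_0)\in\mathcal O_v$ but no other index of $\mathcal O_v$ lies in the half-open interval $[(\alpha_0,s_0),(\alpha_0/q^{m+1},1))$) under which the coefficient comparison \emph{is} clean and gives $p_m(\alpha_0/q^m)=0$. Then, fixing a $q$-orbit and listing the occurring exponents $0\le j_1<\cdots<j_t$ (so $t\le d$), it shows: the minimal $j_1$ always has NOI, hence $j_1\le J+m$ for a constant $J$ depending only on $p_m$; and whenever a jump $j_k-j_{k-1}$ exceeds $m$, that $j_k$ again has NOI, forcing $j_k\le J+m$. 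Consequently all jumps beyond the bounded region are at most $m$, and $j_t\le J+m+(d-1)m$. This jump-control argument, not a layer-by-layer descent, is the missing ingredient in your proposal.
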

\begin{proof}
From Propositions~\ref{almlinin}, \ref{actpsi} and \ref{actpsiq2} we know that
the sum
\[\bigoplus_{\substack{\alpha \in K^* \\ s \geq 1}} K \Psi_{\alpha,s}\]
is indeed direct and that it is an $H_K (q)$-submodule of $\mathcal{L}$. By
Proposition~\ref{baspsi} it contains the kernels of all non-zero polynomial
elements in $H_K(q)$. Hence, if $v \in \mathcal{L}$ is as in the theorem,
\[v = \sum_{\substack{\alpha \in K^*
\\ s \geq 1}} \xi_{\alpha,s} \Psi_{\alpha,s}\] for some scalars $\xi_{\alpha,s}$.
We will establish that the only possible pairs $(\alpha,s)$ with
$\xi_{\alpha,s} \neq 0$ lie in some finite set which depends on $d$ and $p_m$
only, and this clearly implies the theorem.

To facilitate terminology, say that $(\alpha,s)$ occurs in $v$ if
$\xi_{\alpha,s} \neq 0$, and that $\alpha$ (resp.\ $s$) occurs in $v$ if there
exists $s$ (resp.\ $\alpha$) such that $(\alpha,s)$ occurs in $v$. Let
\[\mathcal{O}_v = \{(\alpha,s) : (\alpha,s) \textup{ occurs in } v\}.\]
We know that $\mathcal{O}_v$ is a finite set and may assume that it
is not empty. Clearly, if $s$ occurs in $v$, then $s \leq d$ by Propositions~\ref{baspsi}
and~\ref{almlinin}, so it remains to restrict the possibly occurring values of
$\alpha$.

First we consider the case $q = 1$. Choose an element $(\alpha_0, s_0) \in
\mathcal{O}_v$ which is a maximal element of $\mathcal{O}_v$ in the partial
order. From Proposition~\ref{actpsiq2} we have, using that $m\geq 1$,
\[(P-\lambda) (\xi_{\alpha_0, s_0} \Psi_{\alpha_0, s_0}) = c_0 \xi_{\alpha_0, s_0} p_m (\alpha_0) \Psi_{\alpha_0, s_0+m} + \sum_{r < s_0 +m} c \Psi_{\alpha_0, r}\]
for some non-zero $c_0$ (recall that $\fchar K =0$ if $q = 1$).
We claim that none of the other elements $(\alpha, s)$ of $\mathcal{O}_v$ contributes to the coefficient of $\Psi_{\alpha_0, s_0 +m}$ in $(P-\lambda) v$.
Indeed, since the indices of the terms of $(P - \lambda) \Psi_{\alpha, s}$ in Proposition~\ref{actpsiq2} all lie in the order interval
\[[(\alpha,1), (\alpha, s+m)]\]
we would then have that $(\alpha_0, s_0 +m) \leq (\alpha, s+m)$, hence
$(\alpha_0, s_0) \leq (\alpha, s)$ by Lemma~\ref{parordfacts}. But then
$(\alpha, s) = (\alpha_0, s_0)$ by maximality. We conclude that $p_m (\alpha_0)
=0$ for all such maximal elements of $\mathcal{O}_v$. Since each element
$(\alpha, s)$ of $\mathcal{O}_v$ is dominated by a maximal element $(\alpha,
s+j)$ of $\mathcal{O}_v$ for some $j \geq 0$, we conclude that the only
$\alpha$ that can occur in $v$ are roots of $p_m$. This establishes the theorem
for $q=1$. Although we will not use this in the sequel, the argument actually shows that for $q=1$ one can take \[\mathcal{L}_{P,d} = \bigoplus_{\substack{\alpha \in K^* : p_m (\alpha) = 0 \\ s =1, \ldots, d}} K
\Psi_{\alpha,s}.\]

The case $q \neq 1$ is more involved. We start by establishing a fact which we will use a number of times.
Suppose that $(\alpha_0, s_0) \in \mathcal{O}_v$ but that none of the other indices in the
order interval $[(\alpha_0, s_0), (\frac{\alpha_0}{q^{m+1}},1))$ is in $\mathcal{O}_v$ (we will refer to this as property NOI). Then we must have $p_m(\frac{\alpha_0}{q^m})=0$.

In order to see this, note that from Proposition~\ref{actpsi} we have, since
$m\geq 1$, \[(P-\lambda)(\xi_{\alpha_0, s_0} \Psi_{\alpha_0, s_0}) = c_0
\xi_{\alpha_0, s_0} p_m (\frac{\alpha_0}{q^m}) \Psi_{\frac{\alpha_0}{q^m}, s_0}
+ \sum_{r < s_0} c \Psi_{\frac{\alpha_0}{q^m}, r} + \sum_{\substack{i<m \\ r
\leq s_0}} c \Psi_{\frac{\alpha_0}{q^i}, r}\] for some non-zero $c_0$. We claim
that none of the other pairs $(\alpha, s) \in \mathcal{O}_v$ contributes to the
coefficient of $\Psi_{\frac{\alpha_0}{q^m}, s_0}$ in $(P-\lambda)v$. Indeed, if
some $(\alpha, s) \in \mathcal{O}_v$ contributes, then, since all indices of
the terms of $(P- \lambda) \Psi_{\alpha, s}$ in Proposition~\ref{actpsi} lie in
the order interval $[(\alpha,1), (\frac{\alpha}{q^m}, s)]$, we would have
$(\alpha, 1) \leq (\frac{\alpha_0}{q^m}, s_0)$ and $(\frac{\alpha_0}{q^m},s_0)
\leq (\frac{\alpha}{q^m}, s)$. By Lemma~\ref{parordfacts} the second inequality
implies that $(\alpha_0, s_0) \leq (\alpha,s)$. From the first inequality we
know that $\alpha = \frac{\alpha_0}{q^m} q^j$ for some $j \geq 0$, hence
$\alpha = \frac{\alpha_0}{q^{m+1}} q^{j+1}$ with $j+1 > 0$, so that $(\alpha,s)
< (\frac{\alpha_0}{q^{m+1}}, 1)$. Hence $(\alpha,s) \in [(\alpha_0, s_0),
(\frac{\alpha_0}{q^{m+1}}, 1))$ and $(\alpha,s) = (\alpha_0, s_0)$ by
assumption. Since there are no other contributions we must have
$p_m(\frac{\alpha_0}{q^m})=0$ as asserted.

Before we proceed, let us introduce some notation as a preparation. If $\beta$ and $\widetilde\beta$ are non-zero roots of $p_m$, let us say that $\beta\leq\widetilde\beta$ if $\beta = q^j \widetilde\beta$ for some $j
\geq 0$. This introduces a partial ordering on the set of all non-zero roots of $p_m$, and we let
$\beta_1, \ldots, \beta_h$ denote the maximal elements (where $h \leq \deg
p_m$) in this set. Hence each non-zero root $\beta$ of $p_m$ can be written
as $q^{j(\beta)} \beta_{i(\beta)}$ for some uniquely determined $j(\beta) \geq
0$ and $i(\beta)\in \{1, \ldots, h\}$. We let $J = \max \{j(\beta) : \beta\neq 0 \textup{ and }p_m
(\beta) = 0\}$ denote the maximal degree which is needed.

Continuing with the proof we note that, if $(\alpha_0, s_0)$ is a maximal element of $\mathcal{O}_v$, then
certainly $(\alpha_0, s_0)$ has property NOI, hence
$\frac{\alpha_0}{q^m}$ is of the form $q^j\beta_i$ for some $1\leq j\leq J$ and
$i \in \{1, \ldots, h\}$. Since each element $(\alpha, s)$ in $\mathcal{O}_v$
is dominated by a maximal element of the form $(\frac{\alpha}{q^k},r)$ for some
$k \geq 0$ and $r \geq 1$, we conclude that each $\alpha$ that occurs in $v$
must be of the form $\alpha = q^l \beta_i$ for some $l \geq 0$ and $i \in \{1,
\ldots, h\}$. Thus the list of possibly occurring values of $\alpha$ is already
shown to be independent of $p \in K[X]$ and $\lambda, \mu\in K$, as it depends
on $p_m$ only, but is still countably infinite at this stage. We show that it
is finite by establishing that, in the unique factorization $\alpha = q^j
\beta_i$ of an occurring $\alpha$, the exponent $j$ is bounded in terms of $d$ and $p_m$ only.

To this end, fix $i$ and consider the set of all $j \geq 0$ (if any) such that
$q^j \beta_i$ occurs in $v$, and arrange them in increasing order, say $0 \leq
j_1 < \ldots < j_t$. We will set out to establish a bound on $j_t$. To start
with, note that $t \leq d$ as $v\in\Ker (p(M)-\mu)$ and $\dim \Ker (p(M) - \mu)
\leq d$. The next step is to obtain a bound on $j_1$. Among all $s$ such that
$(q^{j_1} \beta_i, s) \in \mathcal{O}_v$, let $s_0$ be the largest one. Then
$(q^{j_1} \beta_i, s_0)$ has property NOI. Indeed, if another index in the order
interval
\[[(q^{j_1} \beta_i, s_0), (\frac{q^{j_1}}{q^{m+1}}\beta_i, 1))\]
is in $\mathcal{O}_v$ then, by the choice of $s_0$, such an index must be of
the form $(\frac{q^{j_1}}{q^a}\beta_i, r)$ for some $a > 0$. But then $q^{j_1
-a} \beta_i$ occurs in $v$, contradicting the minimal choice of $j_1$. As
established above, we must have $p_m(\frac{q^{j_1}\beta_i}{q^m})$=0. Hence
$\frac{q^{j_1}\beta_i}{q^m}=q^a\beta_i$ for a uniquely determined $0\leq a\leq
J$ and we conclude that $0\leq j_1\leq J+m$.

The subsequent step is to consider the jumps $j_k-j_{k-1}$ for $k\geq 2$. We
claim that, if $k \geq 2$ and $j_k - j_{k-1} > m$, then $j_k \leq J+m$. To see this, suppose $j_k - j_{k-1} > m$ and among all $s$ such that $(q^{j_k} \beta_i, s) \in \mathcal{O}_v$, let $s_0$ be
the largest one. Then $(q^{j_k} \beta_i, s_0)$ has property NOI. Namely, if
another index in the order interval
\[[(q^{j_k} \beta_i, s_0), (\frac{q^{j_k}}{q^{m+1}}\beta_i, 1))\]
is in $\mathcal{O}_v$, then by the choice of $s_0$, such an index must be of
the form $(\frac{q^{j_k}}{q^a}\beta_i, r)$ for some $0 < a \leq m$. But then
$q^{j_k -a} \beta_i$ occurs in $v$, and $j_{k-1}<j_k-m\leq j_k-a<j_k $. Thus
$j_k-a$ is then properly between $j_{k-1}$ and $j_k$, contradicting the
definition of the $j_l$. We conclude that, if $k\geq 2$ and $j_k-j_{k-1}>m$, we must have $p_m(\frac{q^{j_k}\beta_i}{q^m})=0$ and hence $0\leq j_k\leq J+m$ as above.

Thus, starting at $j_1 \leq J +m$, the jumps $j_k - j_{k-1}$ are at most $m$ as
soon as $j_k \geq J+m+1$ (if ever). Since there are $t-1 \leq d-1$ jumps, and
$j_1 \leq J+m$, we conclude that $j_t \leq J+m +(d-1)m$.

This argument applies to all $\beta_i$ with $i\in\{1,\ldots,h\}$ and hence
there are at most $h(J+m+(d-1)m+1)\leq (\deg (p_m))(J+m+(d-1)m+1)$ possible values
of $\alpha$ occurring in $\mathcal{O}_v$. Since $J$ depends only on $p_m$ the
proof is complete.

Although we will not use this in the sequel, the argument actually shows that for $q\neq 1$ one can take \[\mathcal{L}_{P,d} = \bigoplus_{\substack{i=1,\ldots h,\, 0\leq j\leq J+m +(d-1)m,\, s =1, \ldots, d}} K
\Psi_{q^j\beta_i,s},\]
with the $\beta_1,\ldots,\beta_h$ and $J$ defined as previously in terms of the $\mathbb Z$-action on the non-zero roots of $p_m$ as given by multiplication with powers of $q$.

\end{proof}

\section{Proof of Theorem~\ref{mainthm}}\label{sec:proof}
We can now put the pieces together and prove Theorem~\ref{mainthm}. We may
clearly assume that $K$ is algebraically closed. In the notation of the
theorem, since $P$ is not constant, $\sigma(P)$ is infinite by
Theorem~\ref{eigspec}. If $\lambda_0 \in \sigma(P)$, then $\Ker(P - \lambda_0)$
has finite dimension by Theorem~\ref{thm:cordimker}. Since $P$ and $Q$ commute,
we see that there are infinitely many different pairs $(\lambda_0, \mu_0) \in K
\times K$ in the simultaneous point spectrum with corresponding simultaneous
eigenvectors $v_{\lambda_0, \mu_0}$. As already remarked in Section~\ref{sec:basicnotions},
$\Delta_{P,Q} (M, \lambda_0, \mu_0)v_{\lambda_0, \mu_0} =0$ for all such pairs.
Suppose, then, that $(\lambda_0, \mu_0)$ is in the simultaneous point spectrum
and that $\Delta_{P,Q} (M, \lambda_0, \mu_0)$ is not constant. Then, since
$(P-\lambda_0) v_{\lambda_0, \mu_0} =0$, and the degree of $\Delta_{P,Q} (M,
\lambda_0, \mu_0)$ as a polynomial in $M$ is uniformly bounded by some $d$ as
$\lambda_0$ and $\mu_0$ vary, Theorem~\ref{simeigfin} shows that $v_{\lambda_0,
\mu_0} \in \mathcal{L}_{P,d}$ where $\mathcal{L}_{P,d}$ is a finite dimensional
space which depends only on $P$ and $d$. But by linear independence, this can
happen for at most $\dim\mathcal{L}_{P,d}$ pairs $(\lambda_0, \mu_0)$. For the
remaining infinitely many $(\lambda_0, \mu_0)$, $\Delta(M, \lambda_0, \mu_0)$
must be a constant and then, as $v_{\lambda_0, \mu_0} \neq 0$, it is zero in
$H_K(q)$.

We conclude that, for all $i$,  $\delta_i (\lambda_0, \mu_0) = 0$ for
infinitely many different simultaneous eigenvalues. But then $\delta_i (P,Q)$
has an infinite dimensional kernel and by Theorem~\ref{thm:cordimker} we are
done. \\

\noindent {\bf Acknowledgments.} This work was supported by a visitor's grant
of the Netherlands Organisation for Scientific Research (NWO), the Swedish
Foundation for International Cooperation in Research and Higher Education
(STINT), the Crafoord Foundation, the Royal Physiographic Society in Lund, and
the Royal Swedish Academy of Sciences. We are also grateful to Lars Hellstr\"om
and Daniel Larsson for helpful comments and discussions, and to the referee for his detailed suggestions concerning the exposition of the material.

\end{document}